\documentclass[10pt,reqno]{amsart}
\usepackage[T2A]{fontenc}
\usepackage{amsmath}
\usepackage{amssymb}
\usepackage{amsfonts}
\usepackage{stmaryrd} 
\usepackage{calrsfs} 

\newtheorem{thm}{Theorem}
\newtheorem*{thm*}{Theorem}
\newtheorem{prop}{Proposition}
\newtheorem{lem}[thm]{Lemma}

\newtheorem{conj}{Conjecture}

\begin{document}

\title[The behavior of $\pi$-submaximal subgroups]{The behavior of $\pi$-submaximal subgroups under homomorphisms with  $\pi$-separable kernels}
\author{Danila O. Revin}%
\address{Danila O. Revin
\newline\hphantom{iii} Sobolev Institute of Mathematics,
\newline\hphantom{iii} 4, Koptyug av.
\newline\hphantom{iii} 630090, Novosibirsk, Russia
} \email{revin@math.nsc.ru}

\author{Andrei V. Zavarnitsine}%
\address{Andrei V. Zavarnitsine
\newline\hphantom{iii} Sobolev Institute of Mathematics,
\newline\hphantom{iii} 4, Koptyug av.
\newline\hphantom{iii} 630090, Novosibirsk, Russia
} \email{zav@math.nsc.ru}

\maketitle {\small
\begin{quote}
\noindent{{\sc Abstract.} We explore the extent to which constructing the inductive theory of $\mathfrak{X}$-submaximal subgroups is possible. To this end, we study the behavior of $\pi$-submaximal subgroups under homomorphisms with $\pi$-separable kernels and construct
examples where such behavior is irregular.}

\medskip
\noindent{{\sc Keywords:} $\mathfrak{X}$-maximal subgroup, $\mathfrak{X}$-submaximal subgroup,
complete class, $\pi$-separable group
}

\medskip
\noindent{\sc MSC2010:
20E28,  
20E34   
}
\end{quote}
}

\medskip
\hfill{\em In honor of the 50th birthday of Professor Andrey V.\,Vasil'ev}

\section{Introduction}

The present paper is concerned with the extent to which constructing the inductive theory of $\mathfrak{X}$-submaximal subgroups is possible. This concept was introduced by H.\,Wielandt during his talk at the Santa Cruz Conference on Finite Groups in \cite{80Wie} as a tool to study $\mathfrak{X}$-maximal subgroups of finite groups. We recall the background and formulate the main definitions and problems.

All groups considered throughout the text are assumed to be finite.

Since its inception in the papers by \'E.\,Galois and C.\,Jordan \cite{Galois,Jordan0,Jordan}, group theory has had the following as one of its central problems.
Given a group $G$, find its subgroups possessing a specific property or, equivalently, belonging to a specific class ${\mathfrak X}$ of groups (for example, solvable, nilpotent, abelian, $p$-groups, etc.). If ${\mathfrak X}$ has good properties resembling those of solvable groups
then to solve the general problem it suffices to know the so-called {\it maximal $\mathfrak{X}$-subgroups} (or {\em $\mathfrak{X}$-maximal subgroups}), i.\,e. the maximal by inclusion subgroup belonging to $\mathfrak{X}$.
At the famous group theory conference held in Santa Cruz as early as 1979, H.\,Wielandt
put forward the program to study the $\mathfrak{X}$-maximal subgroups in finite groups for the  so-called complete classes $\mathfrak{X}$ \cite{80Wie}. According to his definition, a nonempty class $\mathfrak{X}$ of finite groups is {\it complete} if $\mathfrak{X}$ is closed with respect to taking subgroups, homomorphic images, and extensions. The latter means that $\mathfrak{X}$ includes every group $G$ with a normal subgroup $A$ such that $A, G/A\in\mathfrak{X}$. Other than the class of solvable groups, an important example of a complete class is the class of $\pi$-groups for any set $\pi$ of primes, i.\,e. the groups all whose prime divisors of the order belong to $\pi$. Unless
otherwise stated, we will henceforth assume that $\mathfrak{X}$ is a fixed complete class. Wielandt viewed his program as a development of H\"older's program which replaced the study of an arbitrary finite group $G$ with the study of quotients of a subnormal (e.\,g., composition) series
$$
 G=G_0\geq G_1\geq\dots\geq G_n=1
$$
and, in place of a subgroup $H$ of $G$, its {\it projections}
$$H^i=(H\cap G_{i-1})G_i/G_i$$
on the corresponding quotients $G^i=G_{i-1}/G_i$ were considered. Clearly, if all projections $H^i$ are $\mathfrak{X}$-maximal in $G^i$ then $H$ itself is $\mathfrak{X}$-maximal in $G$. The problem is to see if the reverse holds, which is equivalent to  asking whether the subgroups $H\cap A$ and $HA/A$ are $\mathfrak{X}$-maximal in a normal subgroup $A$ and the quotient $G/A$ if $H$ is $\mathfrak{X}$-maximal in $G$. The answer depends on the class $\mathfrak{X}$ and determines the extent to which the problem of finding  $\mathfrak{X}$-maximal subgroups is inductive. The classes for which the answer is in the affirmative are listed in \cite{20Rev}: such is the class of all finite groups, the class of groups of order $1$, and the class of all $p$-groups for any prime $p$. For all the remaining cases, there exists a group $A$ with nonconjugate $\mathfrak{X}$-maximal subgroups. Wielandt proposed \cite[4.3]{80Wie} a rather general construction showing that, whichever finite group $G$ is, there exists an epimorphism from the regular wreath product $G^*=A\wr G$ onto $G$ such that every (not only maximal!) $\mathfrak{X}$-subgroup of $G$ is the image of an $\mathfrak{X}$-maximal subgroup of~$G$.

Wielandt's idea \cite{80Wie} was to consider instead of $\mathfrak{X}$-maximal subgroups the broader, yet substantial, concept of $\mathfrak{X}$-submaximal subgroups which ``behave well''
under intersections with normal and subnormal subgroups. A subgroup $H$ of $G$ is
{\it $\mathfrak{X}$-submaximal} (or a {\it submaximal $\mathfrak{X}$-subgroup}) if there exists an embedding of $G$ as a subnormal subgroup into a group $G^*$ (in which case we say that $G$ is {\em subnormally embedded} in $G^*$) such that $H$ coincides with the intersection of $G$ and $K$ for a suitable $\mathfrak{X}$-maximal subgroup $K$ of $G^*$. Clearly, every $\mathfrak{X}$-maximal subgroup is also $\mathfrak{X}$-submaximal. The converse is not true, see \cite[p.~13]{18GuoRev1}. $\mathfrak{X}$-submaximal subgroups have the obvious inductive property: if $H$ is $\mathfrak{X}$-maximal and $A$ is subnormal (in particular, normal) in~$G$ then $H\cap A$ is $\mathfrak{X}$-submaximal in~$A$. Not every $\mathfrak{X}$-subgroup is submaximal. An obstruction here is the following

\begin{thm*}[Wielandt--Hartley, {\cite[5.4(a)]{80Wie}}, {\cite[Theorem~2]{19RevSkrV}}]
If $H$~is an $\mathfrak{X}$-sub\-maxi\-mal subgroup of $G$ then $N_G(H)/H$ includes no nontrivial $\mathfrak{X}$-subgroups.
\end{thm*}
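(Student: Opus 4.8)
The plan is to argue by contraposition. Unwinding the definition, fix a witness to submaximality: a subnormal embedding $G\trianglelefteq\trianglelefteq G^*$ and an $\mathfrak{X}$-maximal subgroup $K$ of $G^*$ with $H=G\cap K$; since $\mathfrak{X}$ is subgroup-closed, $H\in\mathfrak{X}$. Suppose, for contradiction, that $V/H$ is a nontrivial $\mathfrak{X}$-subgroup of $N_G(H)/H$, with $H\trianglelefteq V\le N_G(H)$. As $H\in\mathfrak{X}$ and $V/H\in\mathfrak{X}$, extension-closure gives $V\in\mathfrak{X}$, and $V\supsetneq H$. The first move is to reduce to the case where $G$ is \emph{normal} in $G^*$: if $G\trianglelefteq M\trianglelefteq\trianglelefteq G^*$ is the penultimate term of a subnormal series, then $K\cap M$ is again $\mathfrak{X}$-submaximal in $M$ (intersecting an $\mathfrak{X}$-maximal subgroup with a subnormal subgroup yields a submaximal one), and $H=G\cap(K\cap M)$; peeling the chain one normal step at a time reduces everything to a single normal embedding $G\trianglelefteq G^*$ with $K$ an $\mathfrak{X}$-maximal (or submaximal) subgroup.

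Assume now $G\trianglelefteq G^*$. Then $H=G\cap K$ is normal in $K$, since for $k\in K$ we have $H^k=(G\cap K)^k=G\cap K=H$. Hence both $K$ and $V$ lie in $N:=N_{G^*}(H)$, and I would pass to $\bar N=N/H$. Here $\bar K=K/H$ is $\mathfrak{X}$-maximal in $\bar N$ (if $\bar K\le M/H\in\mathfrak{X}$ then $M\in\mathfrak{X}$ by extension-closure, forcing $M=K$ by maximality of $K$ in $G^*$); the image $\bar G=N_G(H)/H$ of $G\cap N$ is normal in $\bar N$ with $\bar K\cap\bar G=1$, because $K\cap(G\cap N)=K\cap G=H$; and $\bar V=V/H$ is a nontrivial $\mathfrak{X}$-subgroup of $\bar G$. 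This reduces the theorem to the following clean statement, which I will call $(\ast)$: \emph{if $G\trianglelefteq G^*$, $K$ is $\mathfrak{X}$-maximal in $G^*$ and $K\cap G=1$, then $G$ contains no nontrivial $\mathfrak{X}$-subgroup.}

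The key lever for $(\ast)$ is this: a nontrivial $K$-invariant $\mathfrak{X}$-subgroup $W\le G$ would be fatal, since then $KW$ is a subgroup with $W\trianglelefteq KW$, $KW/W\cong K/(K\cap W)\cong K\in\mathfrak{X}$, so $KW\in\mathfrak{X}$ by extension-closure, while $KW\supsetneq K$ (as $W\ne1$, $W\cap K\le G\cap K=1$), contradicting the $\mathfrak{X}$-maximality of $K$. A first, free constraint is obtained from the $\mathfrak{X}$-radical: since $K\cdot O_{\mathfrak{X}}(G^*)$ is an $\mathfrak{X}$-subgroup (product of $K$ with a normal $\mathfrak{X}$-subgroup), maximality forces $O_{\mathfrak{X}}(G^*)\le K$, whence $O_{\mathfrak{X}}(G)\le K\cap G=1$. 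Thus $G$ has no nontrivial \emph{normal} (in particular, no characteristic) $\mathfrak{X}$-subgroup, so the radical cannot directly supply the desired $K$-invariant $W$. I would then induct on $|G^*|$, first reducing to $G^*=KG$ (otherwise $(\ast)$ for the smaller group $KG$ already gives the conclusion), and then, taking a minimal normal subgroup $G_1\le G$ of $G^*$, showing $G_1\notin\mathfrak{X}$ (else $G_1\le O_{\mathfrak{X}}(G^*)\le K$) and pushing the induction through the quotient $G^*/G_1$, so that the surviving case is $G=G_1$ \emph{characteristically simple}, a power $T^m$ of a nonabelian simple group $T\notin\mathfrak{X}$.

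The main obstacle is precisely this characteristically simple case. Here the naive finish is unavailable: if $\mathfrak{X}$-maximal subgroups of $G^*$ were all conjugate, then enlarging $V$ to an $\mathfrak{X}$-maximal subgroup $U$ of $G^*$ (so $U\cap G\ne1$) and conjugating $U$ to $K$ would yield $K\cap G\ne1$, a contradiction; but conjugacy of $\mathfrak{X}$-maximal subgroups fails for general complete classes $\mathfrak{X}$ and is, in fact, exactly the deep phenomenon this paper studies. Consequently, in the case $G=T^m$ I expect to have to produce a nontrivial $K$-invariant $\mathfrak{X}$-subgroup of $T^m$ by hand, analyzing the action of the $\mathfrak{X}$-group $K$ through $\operatorname{Aut}(T)\wr S_m$ on the factors and exploiting the structure of $\mathfrak{X}$-subgroups of the (almost) simple group $T$ invariant under an $\mathfrak{X}$-group of automorphisms. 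This is the step where the genuine group-theoretic content resides; the preceding reductions are essentially formal consequences of completeness of $\mathfrak{X}$.
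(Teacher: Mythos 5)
First, a point of reference: this paper contains no proof of the statement at all --- it is quoted from Wielandt \cite[5.4(a)]{80Wie}, and for a proof the authors point to \cite[Theorem~2]{19RevSkrV}; so your attempt has to be measured against that published proof, which is not elementary (it relies on the classification of finite simple groups). Your proposal has two genuine gaps, and they sit exactly where the difficulty of the theorem lies. The first is the opening reduction to a normal embedding. As you yourself note parenthetically, peeling the subnormal chain degrades the witness: $K\cap M$ is only $\mathfrak{X}$-\emph{submaximal} in $M$, not $\mathfrak{X}$-maximal. But your subsequent argument --- passing to $\overline{N}=N_{G^*}(H)/H$ and proving that $\overline{K}$ is $\mathfrak{X}$-maximal there --- genuinely uses the $\mathfrak{X}$-maximality of $K$ in the ambient group (it is what forces $M=K$), and it also uses that $K$ normalizes $H=G\cap K$, which holds when $G\trianglelefteqslant G^*$ but is precisely what fails when $G$ is merely subnormal. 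With a submaximal witness allowed, the ``reduced'' statement contains the original theorem as the special case $G^*=G$, $K=H$, so the reduction is circular. Nor can it be repaired: as this very paper emphasizes, citing Vasil'ev's examples in \cite[Section~2]{19RevSkrV}, the subnormal embedding in the definition of an $\mathfrak{X}$-submaximal subgroup cannot in general be replaced by a normal one, so there exist submaximal subgroups that are \emph{not} of the form $G\cap K$ for any normal embedding $G\trianglelefteqslant G^*$ and $\mathfrak{X}$-maximal $K\leqslant G^*$. A strategy whose first step is ``assume $G\trianglelefteqslant G^*$ with $K$ maximal'' therefore cannot reach all cases of the theorem.

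The second gap is the one you flag honestly at the end: the case of characteristically simple $G=T^m$ acted on by the $\mathfrak{X}$-group $K$ is left entirely open, and it is not a loose end --- it is the heart of the theorem. Producing a nontrivial $K$-invariant $\mathfrak{X}$-subgroup of $T^m$ reduces (via the orbit argument you sketch) to the same question for a single simple factor $T$ under a point stabilizer $K_1$; the subcase where the image of $K_1$ in $\operatorname{Aut}(T)$ meets $\operatorname{Inn}(T)$ nontrivially is easy, but the complementary subcase forces $K_1$ into $\operatorname{Out}(T)$ and is where the classification of finite simple groups enters in \cite{19RevSkrV} (already the solvability of $\operatorname{Out}(T)$, i.e.\ the Schreier conjecture, is CFSG-dependent, and one then needs a case analysis of how field, graph and diagonal automorphisms act). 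Your soft reductions --- the passage to $(\ast)$ from the normal case, the $\mathfrak{X}$-radical observation, the minimal-normal-subgroup induction with the quotient step justified by Wielandt's quotient lemma (Lemma~\ref{pimhom} of this paper) --- are correct and standard. But the two missing pieces, the subnormal-to-normal passage and the simple-group case, are the substance: Wielandt announced this statement in 1979 and it was proved only in \cite{19RevSkrV}, some forty years later, by non-formal means. As it stands, your proposal is a correct organization of the easy part, not a proof.
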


It is due to this theorem that the concept of an $\mathfrak{X}$-submaximal subgroup becomes useful and efficient. For example, it helps to easily see that the $\mathfrak{X}$-maximal subgroups are determined uniquely up to conjugacy by their projections on the quotients of a subnormal series \cite[5.4(c)]{80Wie}, \cite[Corollary~1]{19RevSkrV}. In comparison, the similarly defined dual concept of
an $\mathfrak{X}$-epimaximal subgroup which ensures ``good behaviour'' under homomorphisms
turns out to lack content, because in any nontrivial situation it is equivalent to the concept of an $\mathfrak{X}$-subgroup \cite{20Rev}. It was shown in \cite{18GuoRev1,18GuoRev2} that the knowledge of $\mathfrak{X}$-submaximal subgroups in simple groups for a given class $\mathfrak{X}$ would make it possible to inductively construct the $\mathfrak{X}$-maximal subgroups in an arbitrary finite group and, consequently, would make great progress in solving the general problem.

One can naturally ask to what extent the problem of finding  $\mathfrak{X}$-submaximal subgroups  is inductive. By this, we mean the following. Besides the fact that $\mathfrak{X}$-submaximal subgroups ``behave well'' with respect to intersections with (sub)normal subgroups, it is important to determine how they behave under homomorphisms. Due to the above, we cannot expect the image of an  $\mathfrak{X}$-submaximal subgroup in a factor group to be $\mathfrak{X}$-submaximal: this fails even for the images of $\mathfrak{X}$-maximal subgroups.
At the same time, generalizing a famous result by S.\,A.\,Chunikhin (see \cite[Ch. 5, Theorem 3.7]{86Suz})  Wielandt showed
that the following reduction theorem holds:

\begin{thm*}[Wielandt, {\cite[12.9]{94Wie607}}] If $A$ is an $\mathfrak{X}$-separable normal subgroup of a group~$G$ (i.e., $A$ has a subnormal series with every factor either belonging to $\mathfrak{X}$ or to the class $\mathfrak{X}'$ that consists of all groups none of whose nonidentity subgroups belongs to~$\mathfrak{X}$), then the canonical epimorphism $G\rightarrow G/A$ maps an $\mathfrak{X}$-maximal subgroup to an $\mathfrak{X}$-maximal one and, furthermore, induces a bijection between the conjugacy classes of $\mathfrak{X}$-maximal subgroups in $G$ and $G/A$.
\end{thm*}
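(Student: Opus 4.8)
The plan is to reduce everything to two ``atomic'' situations, and the first thing I would record is the structural consequence of completeness that makes those situations tractable. Let $\pi=\pi(\mathfrak{X})$ be the set of primes dividing the orders of $\mathfrak{X}$-groups. By definition every $\mathfrak{X}$-group is a $\pi$-group, and conversely $\mathbb{Z}/p\in\mathfrak{X}$ for each $p\in\pi$, since $\mathfrak{X}$ is subgroup-closed and Cauchy produces an element of order $p$ inside some $\mathfrak{X}$-group. These two facts pin down $\mathfrak{X}'$ exactly as the class of $\pi'$-groups: a $\pi'$-group has no nontrivial $\pi$-subgroup and hence no nontrivial $\mathfrak{X}$-subgroup, while any group of order divisible by some $p\in\pi$ contains $\mathbb{Z}/p\in\mathfrak{X}$. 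Thus $\mathfrak{X}$- and $\mathfrak{X}'$-groups have coprime orders, an $\mathfrak{X}$-separable group is in particular $\pi$-separable, and one checks that the $\mathfrak{X}$-separable groups themselves form a complete class, so that quotients and normal subgroups of $A$ remain $\mathfrak{X}$-separable, which is exactly what the induction needs.

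Next I would induct on $|A|$, at each step peeling off a nontrivial $G$-normal subgroup with atomic factor. If $O_{\mathfrak{X}}(A)\ne 1$, this characteristic (hence $G$-normal) subgroup lies in $\mathfrak{X}$. Otherwise, since $A$ is $\mathfrak{X}$-separable its $\pi$-composition factors are simple $\mathfrak{X}$-groups, so the $G$-normal $\pi$-subgroup $O_{\pi}(A)$ already lies in $\mathfrak{X}$ and therefore equals $O_{\mathfrak{X}}(A)=1$; $\pi$-separability of $A\ne 1$ then forces $O_{\pi'}(A)\ne 1$, a $G$-normal $\mathfrak{X}'$-subgroup. Factoring $G\to G/A$ through $G\to G/B$ for such an atomic $B\trianglelefteq G$ and applying the inductive hypothesis to $G/B$ with $A/B$, the bijection and the ``maps $\mathfrak{X}$-maximal to $\mathfrak{X}$-maximal'' statement compose, so it remains only to treat $A\in\mathfrak{X}$ and $A\in\mathfrak{X}'$.

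The case $A\in\mathfrak{X}$ is short. For $\mathfrak{X}$-maximal $H$, the subgroup $HA$ is an extension of $A\in\mathfrak{X}$ by $HA/A\cong H/(H\cap A)\in\mathfrak{X}$, hence $HA\in\mathfrak{X}$ by completeness, and maximality gives $A\le H$. The correspondence theorem then matches the subgroups of $G$ above $A$ with those of $G/A$, preserving membership in $\mathfrak{X}$ (again by extension-closure) and conjugacy, which delivers the bijection immediately.

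The real work, and where I expect the main obstacle, is $A\in\mathfrak{X}'$. Here $H\cap A$ is both a $\pi$- and a $\pi'$-group, so $H\cap A=1$, $H$ maps isomorphically onto $\bar H=HA/A\in\mathfrak{X}$, and $H$ is a complement to the normal Hall $\pi'$-subgroup $A$ of $K=HA$; note also that $\overline{H^g}=\bar H^{\bar g}$, so the induced map on conjugacy classes is well defined. The coprimality secured above lets me import the Schur--Zassenhaus theorem: whenever $A\trianglelefteq K$ with $A\in\mathfrak{X}'$ and $K/A\in\mathfrak{X}$, complements to $A$ exist, are conjugate (one of $A$, $K/A$ is solvable by the Feit--Thompson theorem), and every $\pi$-subgroup meeting $A$ trivially lies in some complement. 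From the embedding statement I would show $\bar H$ is $\mathfrak{X}$-maximal: an $\mathfrak{X}$-subgroup $\bar M\supseteq\bar H$ pulls back to $M$ with $M/A\in\mathfrak{X}$, the $\pi$-subgroup $H$ sits inside a complement $C\le M$ with $C\cong M/A\in\mathfrak{X}$, and $\mathfrak{X}$-maximality of $H$ in $G$ forces $H=C$, whence $\bar M=\bar H$. Surjectivity follows by lifting an $\mathfrak{X}$-maximal $\bar K$ to a complement in its preimage and running the same maximality argument in reverse; injectivity follows because two $\mathfrak{X}$-maximal subgroups with conjugate images may be assumed to have equal image, making them complements to $A$ in a common preimage and so conjugate by Schur--Zassenhaus. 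The delicate point throughout is the gap between being an $\mathfrak{X}$-subgroup and being $\mathfrak{X}$-maximal: the complement theory only yields $\mathfrak{X}$-subgroups of the correct order, and it is the maximality of $H$ in all of $G$ (rather than merely in $K$ or $M$) that must be exploited to upgrade them to genuine $\mathfrak{X}$-maximal subgroups and to turn the correspondence into a bijection on conjugacy classes.
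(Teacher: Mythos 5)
The paper never proves this statement: it is quoted in the introduction as a known theorem of Wielandt, with the reference \cite[12.9]{94Wie607}, and the closest the paper comes to its content is Lemma~\ref{pimhom} (Wielandt's 12.4 and 12.7), which records only the ``image of a $\pi$-maximal subgroup is $\pi$-maximal'' half in the two atomic situations of a normal $\pi$- or $\pi'$-kernel. So your proposal can only be judged against the classical argument, and by that measure it is correct and essentially complete. The three pillars all hold up. First, the identification of $\mathfrak{X}'$ with the class of $\pi'$-groups for $\pi=\pi(\mathfrak{X})$ (Cauchy plus subgroup- and isomorphism-closure of $\mathfrak{X}$) is what secures the coprimality used later, and it is proved correctly. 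Second, the induction on $|A|$ peeling off a nontrivial $G$-normal subgroup in $\mathfrak{X}$ or $\mathfrak{X}'$ works: your observation that an $\mathfrak{X}$-separable $\pi$-group lies in $\mathfrak{X}$ (its $\mathfrak{X}'$-factors are trivial, then use extension-closure), applied to $O_\pi(A)$, together with the standard fact that a nontrivial $\pi$-separable group has $O_\pi\neq 1$ or $O_{\pi'}\neq 1$, guarantees the atomic subgroup exists, and the two reduction statements do compose along $G\to G/B\to G/A$. Third, the atomic cases are handled correctly: for $A\in\mathfrak{X}$, every $\mathfrak{X}$-maximal subgroup absorbs $A$ and the correspondence theorem plus extension-closure finishes; for $A\in\mathfrak{X}'$ you invoke Schur--Zassenhaus in its strong form (existence, conjugacy via Feit--Thompson, and containment of any coprime subgroup in a complement --- the last follows from conjugacy by a Dedekind-law argument inside the preimage), and you correctly locate the only delicate point, namely that $\mathfrak{X}$-maximality of $H$ in all of $G$, not merely in the preimage $M$, is what upgrades the complement containing $H$ to equality with $H$; the surjectivity and injectivity sketches both close without trouble. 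Note that your route is not really a different one: it is the same two-case reduction that underlies Wielandt's own development (the paper's Lemma~\ref{pimhom} is exactly the forward half of your atomic cases, stated for the class of $\pi$-groups). What your write-up adds over the paper is self-containedness, and the explicit reduction of the general complete class $\mathfrak{X}$ to the $\pi$-group setting via $\mathfrak{X}'=\{\pi'\text{-groups}\}$, which is the same simplification the paper itself adopts at the start of Section~2.
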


Therefore, the presence of a normal $\mathfrak{X}$-separable subgroup $A$ in $G$ enables us to replace the study of $\mathfrak{X}$-maximal subgroups in $G$ with a similar problem for the smaller group $G/A$.
The inductiveness of the problem of finding $\mathfrak{X}$-subnormal subgroups depends, to a large extent, on whether a similar reduction theorem for $\mathfrak{X}$-submaximal subgroups holds, i.\,e. whether the following is true.

\begin{conj}\label{conj_a}
 If $A$ is an $\mathfrak{X}$-separable normal subgroup of a finite group $G$ then the canonical epimorphism $G\rightarrow G/A$ always maps an $\mathfrak{X}$-submaximal subgroup of $G$ to an $\mathfrak{X}$-submaximal subgroup of $G/A$ and vice versa, every $\mathfrak{X}$-submaximal subgroup of $G/A$ is the image of an $\mathfrak{X}$-submaximal subgroup of $G$.
\end{conj}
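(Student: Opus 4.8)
The plan is to attack the two assertions of the conjecture separately, in each case unfolding the definition of $\mathfrak{X}$-submaximality and trying to transport the witnessing overgroup through the quotient by $A$, using Wielandt's reduction theorem as the engine that converts $\mathfrak{X}$-maximality in an overgroup into $\mathfrak{X}$-maximality in its quotient.

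For the forward implication, suppose $H$ is $\mathfrak{X}$-submaximal in $G$, so that there is a subnormal embedding $G\trianglelefteq\trianglelefteq G^*$ and an $\mathfrak{X}$-maximal subgroup $K$ of $G^*$ with $H=K\cap G$. I would first record that the class of $\mathfrak{X}$-separable groups is determined by composition factors and is therefore closed under products of subnormal subgroups; consequently the normal closure $A^*=A^{G^*}$, being generated by the subnormal $\mathfrak{X}$-separable subgroups $A^g$, is again $\mathfrak{X}$-separable and is normal in $G^*$. Wielandt's reduction theorem then makes $KA^*/A^*$ an $\mathfrak{X}$-maximal subgroup of $G^*/A^*$, while $GA^*/A^*$ is subnormal in $G^*/A^*$; intersecting, $(KA^*/A^*)\cap(GA^*/A^*)$ is $\mathfrak{X}$-submaximal in $GA^*/A^*\cong G/(G\cap A^*)$. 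The remaining task is to identify this subgroup with the image of $H$ and, crucially, to replace $G\cap A^*$ by $A$.

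For the reverse implication, suppose $\bar H$ is $\mathfrak{X}$-submaximal in $\bar G=G/A$, witnessed by $\bar G\trianglelefteq\trianglelefteq \bar G^*$ and an $\mathfrak{X}$-maximal subgroup $\bar K$ with $\bar H=\bar K\cap\bar G$. Here I would try to \emph{lift} the witness: construct a group $G^*$ fitting into an extension $1\to A\to G^*\to \bar G^*\to 1$ in which the preimage of $\bar G$ is precisely $G$ (with the given projection $G\to\bar G$) and $G$ is subnormally embedded --- a fibre-product / induced-extension construction modelled on Wielandt's regular wreath product $A\wr G$ recalled in the introduction. If such a $G^*$ exists with $A$ (still $\mathfrak{X}$-separable) normal in it, then the conjugacy bijection in Wielandt's reduction theorem lets me pull $\bar K$ back to an $\mathfrak{X}$-maximal $K\leq G^*$ with $KA/A=\bar K$, and $H=K\cap G$ should be the desired preimage, satisfying $HA/A=\bar H$.

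The hard part, and the place I expect the argument to break, is the normality of $A$ inside the overgroup. In the forward direction the only normal $\mathfrak{X}$-separable subgroup of $G^*$ containing $A$ that is canonically available is $A^{G^*}$, and there is no reason for $A^{G^*}\cap G$ to equal $A$: since $A$ is normal in $G$ but typically only subnormal in $G^*$, forming its normal closure enlarges it within $G$, and one ends up proving submaximality of the image in the smaller quotient $G/(G\cap A^{G^*})$ rather than in $G/A$. Success therefore hinges on being free to choose the witnessing overgroup $G^*$ so that $A$ sits normally in it (or at least $A^{G^*}\cap G=A$), and the dual difficulty --- extending the extension $1\to A\to G\to\bar G\to 1$ over $\bar G^*$ while keeping $G$ subnormal --- governs the reverse direction. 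I expect precisely this failure to control $A$ under passage to the overgroup to be what the irregular examples of the paper exploit.
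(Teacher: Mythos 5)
You cannot complete this proof, because the statement is false --- it is labelled a conjecture precisely because the paper's purpose is to \emph{refute} it. There is no proof in the paper to compare yours against; instead, the final section constructs counterexamples killing both directions at once, with $\mathfrak{X}$ the class of $\{2,3\}$-groups. Concretely: let $L=\operatorname{PSL}_2(7)$, let $V$ be a suitable $3$-dimensional $FL$-module over a field $F$ of characteristic coprime to $|L|$ (so that $V$ is an abelian $\pi'$-group, hence $\mathfrak{X}$-separable), and let $V^*$ be its contragredient. In the group $H=(V\oplus V^*)L$, a Sylow $2$-subgroup $S$ is $\pi$-submaximal (it is the intersection of $H$ with a $\pi$-maximal Sylow $2$-subgroup of $H^*=H\langle\delta\rangle$, where $\delta$ is the diagonal automorphism swapping $V$ and $V^*$), yet its image under the epimorphism $H\to H/V^*\cong VL$ is a Sylow $2$-subgroup of $G=VL$, which is \emph{not} $\pi$-submaximal (Lemma~\ref{s2n}); this kills the forward direction. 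Dually, the epimorphism $G\to G/V\cong L$ kills the reverse direction: a Sylow $2$-subgroup of $L$ is $\pi$-submaximal in $L$, but any preimage realizing it would have to be a Sylow $2$-subgroup of $G$, again contradicting Lemma~\ref{s2n}.

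That said, you deserve credit for locating exactly the right obstruction: your closing paragraph --- that one cannot force the witnessing overgroup $G^*$ to keep $A$ normal, or even to keep $A^{G^*}\cap G=A$, and that the paper's examples must exploit this --- is precisely what happens. The paper's technical engine, Proposition~\ref{one}, turns your heuristic worry into a rigidity statement: for $G=VL$ as above (unique minimal normal subgroup $V$, abelian, non-central, with $G/V$ simple), a \emph{minimal} witnessing overgroup necessarily contains $G$ normally with $C_{G^*}(G)=1$, hence embeds in $\operatorname{Aut}(G)$; and since the representation is not $\delta$-invariant, $\operatorname{Aut}(G)/V\cong F^\times\times L$ (Lemmas~\ref{norim} and~\ref{autg}), which is too small --- it would force the Sylow $2$-subgroup's image to be $\pi$-maximal in $L$, false because $L$ has subgroups of order $24$. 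There is also a secondary gap in your forward argument worth noting: even granting $G\cap A^{G^*}=A$, the identification $\overline{K}\cap\overline{G}=\overline{K\cap G}$ is not automatic; the paper needs the Wielandt--Hartley theorem plus Schur--Zassenhaus to get it even for $\pi'$-kernels (Lemma~\ref{three}). So the correct resolution of your proposal is not to patch it but to invert it: the failure mode you identified is a theorem, not a difficulty.
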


This conjecture takes central place in the present paper. The original idea of the authors was to attempt confirming both Conjecture \ref{conj_a} and

\begin{conj}\label{conj_b}
 If $A$ is a normal and $H$ an $\mathfrak{X}$-submaximal subgroups of a finite group $G$ then $H$ is $\mathfrak{X}$-submaximal in $N_G(H\cap A)$.
\end{conj}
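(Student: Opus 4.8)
The plan is to unwind the definition of $\mathfrak{X}$-submaximality of $H$ in $G$, transport the relevant data into the ambient overgroup, and then realize $N_G(H\cap A)$ together with a required $\mathfrak{X}$-maximal witness simultaneously by intersecting with a single normalizer taken upstairs. Write $B=H\cap A$. By hypothesis there is a subnormal embedding $G\trianglelefteq\trianglelefteq G^*$ and an $\mathfrak{X}$-maximal subgroup $K$ of $G^*$ with $H=G\cap K$; in particular $H\in\mathfrak{X}$ and $B=H\cap A=K\cap A\in\mathfrak{X}$. Since $A\trianglelefteq G\trianglelefteq\trianglelefteq G^*$ and subnormality is transitive, $A\trianglelefteq\trianglelefteq G^*$; hence $B=K\cap A$ is the intersection of the subnormal subgroup $A$ with $K$, so $B\trianglelefteq\trianglelefteq K$, while at the same time $B=H\cap A\trianglelefteq H$.

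Next I would set $N^{*}=N_{G^*}(B)$ and $N=N_G(B)=N_G(H\cap A)$. Then $H\le N\le N^{*}$ and $N=G\cap N^{*}$, so $N\trianglelefteq\trianglelefteq N^{*}$ because the intersection of the subnormal subgroup $G$ with the subgroup $N^{*}$ is subnormal in $N^{*}$. I propose $K^{*}:=N_K(B)=K\cap N^{*}$ as the $\mathfrak{X}$-maximal witness. A direct computation then gives $N\cap K^{*}=(G\cap N^{*})\cap(K\cap N^{*})=(G\cap K)\cap N^{*}=H\cap N^{*}=H$, using $H\le N^{*}$. Thus, once $K^{*}$ is shown to be $\mathfrak{X}$-maximal in $N^{*}$, the embedding $N\trianglelefteq\trianglelefteq N^{*}$ together with $H=N\cap K^{*}$ witnesses that $H$ is $\mathfrak{X}$-submaximal in $N$, which is exactly the assertion of Conjecture~\ref{conj_b}.

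It therefore remains to establish the single claim that $N_K(B)$ is $\mathfrak{X}$-maximal in $N_{G^*}(B)$. Here I would first reduce modulo $B$: since $B$ is a normal $\mathfrak{X}$-subgroup of $N_{G^*}(B)$ it is in particular $\mathfrak{X}$-separable, so by the Wielandt reduction theorem the epimorphism $N_{G^*}(B)\to N_{G^*}(B)/B$ carries $\mathfrak{X}$-maximal subgroups to $\mathfrak{X}$-maximal subgroups, every $\mathfrak{X}$-maximal subgroup of $N_{G^*}(B)$ contains $B$ (as $PB\in\mathfrak{X}$ forces $P=PB$ by maximality), and $N_K(B)$ is $\mathfrak{X}$-maximal in $N_{G^*}(B)$ exactly when $N_K(B)/B$ is $\mathfrak{X}$-maximal in $N_{G^*}(B)/B$. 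Concretely, the claim reduces to showing that every $\mathfrak{X}$-subgroup $M$ of $N_{G^*}(B)$ with $N_K(B)\le M$ already satisfies $M\le K$, whence $M\le K\cap N_{G^*}(B)=N_K(B)$.

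I expect this final inclusion to be the main obstacle. One has $B\trianglelefteq M$ with $M\in\mathfrak{X}$ and $B\trianglelefteq\trianglelefteq K$, and the natural attack is to induct on the subnormal defect of $B$ in $K$, pushing $M$ into successively larger $\mathfrak{X}$-maximal layers; however, controlling where $M$ sits relative to $K$ seems to require knowing that $B$ is embedded in essentially the same way inside every $\mathfrak{X}$-maximal subgroup of the relevant group, that is, some form of conjugacy of $\mathfrak{X}$-maximal subgroups. For a general complete class $\mathfrak{X}$ this conjugacy can genuinely fail, as the nonconjugate $\mathfrak{X}$-maximal examples recalled in the introduction already show, and it is precisely at this point that I would expect the argument to break down and, correspondingly, where irregular behaviour and possible counterexamples to the conjecture should be looked for.
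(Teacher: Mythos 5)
The first thing to record is that the paper does not prove this statement at all: it is Conjecture~\ref{conj_b}, and the authors say explicitly in the introduction that, while it loses some relevance in light of their counterexamples to Conjecture~\ref{conj_a}, it ``still remains open.'' So there is no paper proof to compare yours against; your proposal can only be judged on its own terms, and on those terms it has a genuine gap.

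Your formal reduction is correct: with $B=H\cap A$, $N^*=N_{G^*}(B)$, $N=N_G(B)$ and $K^*=N_K(B)$, the identities $B=K\cap A$ (using $A\leqslant G$), $N=G\cap N^*$, the subnormality $N\trianglelefteqslant\trianglelefteqslant N^*$, and $N\cap K^*=H$ (using $H\leqslant N^*$, which holds because $A\trianglelefteqslant G$ forces $H\cap A\trianglelefteqslant H$) all hold for the reasons you give. But this means the entire content of the conjecture is now concentrated in the single claim you do not prove: that $N_K(B)$ is $\mathfrak{X}$-maximal in $N_{G^*}(B)$. This is not a lemma-sized technical step. Since $A$ is only \emph{subnormal} in $G^*$ (it is normal in $G$, not in $G^*$), the subgroup $K$ need not normalize $B=K\cap A$, so $N_K(B)$ is in general a proper subgroup of $K$, and neither the Wielandt--Hartley theorem (Lemma~\ref{WH-strong}) nor the reduction result (Lemma~\ref{pimhom}) says anything about maximality of such a normalizer; if $A$ were normal in $G^*$ the claim would be trivial, since then $N_K(B)=K$ is $\mathfrak{X}$-maximal in any subgroup of $G^*$ containing it, but that is precisely the case with no content. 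Indeed, your claim implies Conjecture~\ref{conj_b}, so it is at least as strong as the open problem itself, and your reduction modulo $B$ via Wielandt's theorem, while correct, does not advance it. Your closing paragraph concedes all of this, and your diagnosis --- that finishing would require something like conjugacy of $\mathfrak{X}$-maximal subgroups, which genuinely fails for general complete classes --- is an accurate account of the obstruction. What you have produced is therefore a correct reformulation of the conjecture together with an honest explanation of why it is hard, not a proof; given the status of the statement in the paper, that is the expected outcome, but it must be recorded as a gap.
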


In relation to Conjecture \ref{conj_b}, we note that if $H$ is an $\mathfrak{X}$-submaximal subgroup of $G$ and $H$ is contained in a subgroup $M$ of $G$ then $H$ is not necessarily  $\mathfrak{X}$-submaximal in $M$. Say, an $\mathfrak{X}$-submaximal but not  $\mathfrak{X}$-maximal subgroup $H$ which is contained in a strictly larger  $\mathfrak{X}$-subgroup $M$ is not $\mathfrak{X}$-submaximal in $M$.

If both Conjectures \ref{conj_a} and \ref{conj_b} turned out to be true, the problem of finding $\mathfrak{X}$-submaximal subgroups in an arbitrary finite group could be considered inductive. Let us explain why. Suppose $G$ is not simple and we know how to find the $\mathfrak{X}$-submaximal subgroups in all groups whose order is smaller than the order of $G$. Let~$A$ be a proper normal subgroup of $G$. For any $\mathfrak{X}$-submaximal subgroup $H$ of $G$, the intersection $H\cap A$ is $\mathfrak{X}$-submaximal in $A$ and is therefore known. We may consider the subgroup $N_G(H\cap A)$ in which the $\mathfrak{X}$-subgroup $H$ is submaximal
due to Conjecture B. Furthermore, $N_G(H\cap A)$ includes the normal subgroup $N_A(H\cap A)$ which is $\mathfrak{X}$-separable by the Wielandt--Hartley theorem. Thus, the validity of Conjecture A would reduce finding the $\mathfrak{X}$-submaximal subgroups in $N_G(H\cap A)$ to a similar  problem for $N_G(H\cap A)/N_A(H\cap A)\simeq AN_G(H\cap A)/A$. The quotient $G/A$ acts naturally  on the set of conjugacy classes of $\mathfrak{X}$-submaximal subgroups of~$A$, and $AN_G(H\cap A)/A$ is the stabilizer of the class containing $H\cap A$. Therefore, the problem of finding $\mathfrak{X}$-submaximal subgroups of $G$ is reduced to a similar problem for the stabilizers in $G/A$ of the conjugacy classes of $\mathfrak{X}$-submaximal subgroups of~$A$. We note that, for a simple group $S$, the $\mathfrak{X}$-submaximal subgroups are precisely the intersections of $S=\operatorname{Inn}(S)$ and $\mathfrak{X}$-maximal subgroups of $\operatorname{Aut}(S)$ \cite[5.3]{80Wie}, \cite[Proposition~7]{18GuoRev}.

The fact that Conjecture \ref{conj_a} does not hold was established in \cite{20RevZav}, where an example was constructed of a group $G$ with a normal abelian $2$-subgroup $A$ and a submaximal but not maximal $\{2,3\}$-subgroup $H$ whose image with respect to the canonical epimorphism $G\rightarrow G/A$ is not $\{2,3\}$-submaximal in~$G/A$. The kernel of this homomorphism is an $\mathfrak{X}$-subgroup for the class $\mathfrak{X}$ of all $\{2,3\}$-groups. In the same paper, it was announced that there exist epimorphisms whose kernel belongs to $\mathfrak{X}'$ and the image of an $\mathfrak{X}$-submaximal subgroup is not $\mathfrak{X}$-submaximal. We construct an infinite series of such examples in the final section of this paper. The situation with the preimages of $\mathfrak{X}$-submaximal subgroups is completely analogous, which makes Conjecture~\ref{conj_a} invalid in both directions.

When constructing the examples, we consider a group $G$ with a unique minimal normal subgroup $V$ which is abelian, is not contained in the center of $G$, and is such that $G/V$ is a nonabelian simple group. In order to justify the examples, we need to show that some $\pi$-subgroup of $G$ is not submaximal. We make use of Proposition~\ref{one} stated and proved in Section~\ref{Sec3}, which implies that the $\pi$-submaximal subgroups of $G$ are exhausted by the intersections of $G$ with $\pi$-maximal subgroups of the groups $G^*$ satisfying $G=\operatorname{Inn}(G)\trianglelefteqslant G^* \leqslant\operatorname{Aut}(G)$. We emphasize that the subnormal embedding of $G$ into $G^*$ in the definition of an $\mathfrak{X}$-submaximal subgroup cannot in general be substituted with a normal embedding. An appropriate series of examples was pointed out by A.V.\,Vasil'ev, see \cite[Section~2]{19RevSkrV}. Thus, Proposition~\ref{one} is of interest in its own right and can be considered as one of the main results of this paper.

Although the problem of whether Conjecture B is true loses its relevance in light of the present results along with~\cite{20RevZav}, it still remains open.

Undoubtedly, the examples constructed in~\cite{20RevZav} and in this paper by no means imply that the study of $\mathfrak{X}$-submaximal subgroups is futile. Instead, they lead to the realization as to why the central problem in Wielandt's program and in the topic related to the search of $\mathfrak{X}$-maximal subgroups for a given complete class~$\mathfrak{X}$ is the description of $\mathfrak{X}$-submaximal subgroups in simple groups or, equivalently, the description of  $\mathfrak{X}$-maximal subgroups in almost simple groups. Such a description would make it possible to find the $\mathfrak{X}$-maximal subgroup in arbitrary finite groups.

\section{Preliminaries}

To simplify the exposition, we restrict ourselves to the case where $\mathfrak{X}$ is the class of $\pi$-groups for an arbitrary set $\pi$ of primes.

The following lemma was first formulated in \cite[Statement 5.4(a)]{80Wie}. A proof can be found in \cite[Theorem 2]{19RevSkrV}.

\begin{lem}[The Wielandt--Hartley theorem, strong form]\label{WH-strong}
  Let $A$ be a subnormal and $K$ a $\pi$-maximal subgroup of a finite group $G$. Then $N_A(K\cap A)/(K\cap A)$
  is a $\pi'$-group.
\end{lem}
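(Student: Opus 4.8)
The plan is to deduce the lemma directly from the Wielandt--Hartley theorem stated in the Introduction, once the intersection $K\cap A$ is recognized as a $\pi$-submaximal subgroup of $A$. First I would set $H=K\cap A$ and observe that the pair $(G,K)$ already witnesses the $\pi$-submaximality of $H$ in $A$: by hypothesis $A$ is subnormally embedded in $G$ and $K$ is $\pi$-maximal in $G$, and $H=A\cap K$, which is exactly the defining configuration of an $\mathfrak{X}$-submaximal subgroup for $\mathfrak{X}$ the class of $\pi$-groups. Thus no auxiliary overgroup has to be constructed; $G$ itself plays the role of the ambient group in the definition, and $H$ is $\pi$-submaximal in $A$.

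Next I would apply the Wielandt--Hartley theorem to $H$ inside $A$. Since $H$ is $\pi$-submaximal in $A$, that theorem yields that $N_A(H)/H$ contains no nontrivial $\pi$-subgroup. It then remains to translate this into the stated conclusion, using that for a finite group $X$ the absence of nontrivial $\pi$-subgroups is equivalent to $X$ being a $\pi'$-group: if some $p\in\pi$ divided $|X|$, then Cauchy's theorem would produce an element of order $p$ and hence a nontrivial $\pi$-subgroup, while a $\pi'$-group plainly has none. Applying this with $X=N_A(H)/H=N_A(K\cap A)/(K\cap A)$ completes the argument.

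The only genuine subtlety is this last specialization of the abstract statement ``$N_G(H)/H$ contains no nontrivial $\mathfrak{X}$-subgroup'' to the numerical assertion that the section is a $\pi'$-group, and it is immediate from Cauchy's theorem. If instead one insisted on a self-contained proof not invoking the general theorem, I would induct on the subnormal defect to reduce to the case $A\trianglelefteq G$, then pass from $G$ to $N_G(H)/H$, which makes $\overline{K}=KH/H$ a $\pi$-maximal subgroup meeting the image of $A$ trivially, reducing to a semidirect product $A\rtimes K$ with $K$ a $\pi$-maximal complement; one then kills $O_\pi(A)$, since it is a $K$-invariant normal $\pi$-subgroup and $KO_\pi(A)$ would otherwise be a $\pi$-group properly containing $K$. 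At that point one is left with exactly the situation $O_\pi(A)=1$ yet $A$ not a $\pi'$-group, and producing a $K$-invariant nontrivial $p$-subgroup of $A$ to contradict the maximality of $K$ is precisely Hartley's lemma. This is the real obstacle, and it is exactly what the Wielandt--Hartley theorem already encapsulates, which is why the short deduction above is the efficient route.
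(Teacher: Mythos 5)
Your main argument is correct, but you should know that the paper itself offers no proof of Lemma~\ref{WH-strong}: it is presented as a known result, formulated in \cite[5.4(a)]{80Wie} and proved in \cite[Theorem~2]{19RevSkrV} --- the very same sources credited for the Wielandt--Hartley theorem quoted in the Introduction. Your proof therefore amounts to observing that the two quoted statements are one theorem in two guises: by the definition of submaximality, $K\cap A$ is a $\pi$-submaximal subgroup of $A$ (with $G$ itself serving as the ambient group, as you say), and for the class of $\pi$-groups the conclusion that $N_A(K\cap A)/(K\cap A)$ contains no nontrivial $\pi$-subgroup is equivalent, via Cauchy's theorem, to its being a $\pi'$-group. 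This derivation is valid within the paper's logical structure and is faithful to the paper's own stance of treating the lemma as external knowledge; just be clear that it has no independent mathematical content, since the entire substance is delegated to \cite{19RevSkrV}.

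A caveat on your fallback ``self-contained'' sketch: its first step, inducting on the subnormal defect to reduce to the case $A\trianglelefteqslant G$, is not routine and fails as stated. If $A\trianglelefteqslant A_1\trianglelefteqslant\trianglelefteqslant G$, then $K\cap A_1$ is in general only $\pi$-submaximal in $A_1$, not $\pi$-maximal, so the inductive hypothesis cannot be applied to the pair $(A, K\cap A_1)$ inside $A_1$. This obstruction is exactly what makes the subnormal (``strong'') form genuinely harder than the normal case --- it is the point of calling it a strong form --- and it is why the paper cites \cite{19RevSkrV}, whose proof is far from elementary, instead of sketching an argument. Your primary route does not suffer from this, because the Introduction's theorem is already stated for submaximal subgroups, i.e., with the subnormality built in.
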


\begin{lem}[{\cite[Theorems 12.4 and 12.7]{94Wie607}}]\label{pimhom} If $U$ is a normal $\pi$- or $\pi'$-subgroup of a group $G$ then $KU/U$ is $\pi$-maximal in $G/U$
  for every $\pi$-maximal subgroup $K$ of $G$
\end{lem}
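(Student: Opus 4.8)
The plan is to isolate a single clean reduction and then split into the two cases according to whether $U$ is a $\pi$-group or a $\pi'$-group. First I would dispose of the easy half: since $KU/U\cong K/(K\cap U)$ is a homomorphic image of the $\pi$-group $K$, the subgroup $KU/U$ is itself a $\pi$-group, so it remains only to verify its maximality among $\pi$-subgroups of $G/U$. By the correspondence theorem, any $\pi$-subgroup of $G/U$ containing $KU/U$ has the form $M/U$ with $U\leqslant KU\leqslant M\leqslant G$ and $M/U$ a $\pi$-group, and the goal becomes to show $M=KU$. Throughout, $K\leqslant KU\leqslant M$, and when $U$ is a $\pi'$-group one has $K\cap U=1$, since $K\cap U$ is simultaneously a $\pi$- and a $\pi'$-group.

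When $U$ is a $\pi$-group, $M$ is an extension of the $\pi$-group $U$ by the $\pi$-group $M/U$, hence $M$ is itself a $\pi$-group. Then $M$ is a $\pi$-subgroup of $G$ containing the $\pi$-maximal subgroup $K$, so $M=K$ by maximality; consequently $U\leqslant M=K$, which gives $KU=K=M$, as required. This case rests on nothing beyond the closure of $\pi$-groups under extensions and the maximality of $K$, so it is purely formal.

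The case where $U$ is a $\pi'$-group is where the real content lies. Here $M$ carries the normal series $1\trianglelefteq U\trianglelefteq M$ with $U$ a $\pi'$-group and $M/U$ a $\pi$-group, so $M$ is $\pi$-separable. I would invoke the Hall theory for $\pi$-separable groups: every $\pi$-subgroup of $M$ lies inside some Hall $\pi$-subgroup, so the $\pi$-subgroup $K$ is contained in a Hall $\pi$-subgroup $L$ of $M$. Since $L$ is a $\pi$-subgroup of $G$ containing the $\pi$-maximal subgroup $K$, maximality forces $K=L$, so $K$ itself is a Hall $\pi$-subgroup of $M$. As $U$ is the full normal $\pi'$-part and $K\cap U=1$, comparing orders gives $|KU|=|K|\,|U|=|M|$, whence $M=KU$, finishing the argument.

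I expect the only genuine obstacle to be the $\pi'$-case, and precisely the step embedding the arbitrary $\pi$-subgroup $K$ into a Hall $\pi$-subgroup of the $\pi$-separable group $M$. If one prefers a self-contained treatment, this can be extracted by induction on $|M|$ from the Schur--Zassenhaus theorem applied to $U\trianglelefteq M$, whose coprime hypothesis holds because $U$ is a $\pi'$-group and $M/U$ a $\pi$-group; the needed conjugacy of complements is available in this setting, so one may always position $K$ inside a chosen complement of $U$ and then conclude exactly as above. Everything else reduces to bookkeeping with the correspondence theorem and orders.
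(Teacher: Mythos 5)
Your proof is correct. Note first that the paper itself offers no argument for this lemma: it is quoted directly from Wielandt's lecture notes (Theorems 12.4 and 12.7 of the cited source, which treat precisely your two cases, normal $\pi$-subgroup and normal $\pi'$-subgroup), so there is no in-paper proof to compare against; your write-up supplies a self-contained substitute. Your case split is the natural one. The $\pi$-case is, as you say, purely formal from closure of $\pi$-groups under extensions plus maximality of $K$. In the $\pi'$-case, the reduction is right: for $U\leqslant KU\leqslant M\leqslant G$ with $M/U$ a $\pi$-group, the group $M$ is $\pi$-separable, Čunihin--Hall theory places $K$ inside a Hall $\pi$-subgroup $L$ of $M$, maximality of $K$ in $G$ forces $K=L$, and then $|KU|=|K|\,|U|=|M/U|\,|U|=|M|$ gives $M=KU$. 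The alternative route you sketch via Schur--Zassenhaus applied to $U\trianglelefteqslant KU$ (conjugating $K$ to a subgroup of a fixed complement of $U$ in $M$) is also correct and is essentially how the $D_\pi$-property of $\pi$-separable groups is proved. One point worth making explicit: the conjugacy part of Schur--Zassenhaus, and hence the statement that every $\pi$-subgroup of a $\pi$-separable group lies in a Hall $\pi$-subgroup, requires solvability of $U$ or of $M/U$; in the coprime situation this always holds, but only via the Feit--Thompson odd order theorem. That reliance is standard and harmless here, but your phrase ``the needed conjugacy of complements is available in this setting'' glosses over it.
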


\begin{lem}\label{three}
  Let $G\trianglelefteqslant \trianglelefteqslant G^*$ and let $H=K\cap G$ for some $\pi$-maximal subgroup $K$ of $G^*$.
  Suppose that $U$ is a normal $\pi'$-subgroup of $G^*$ and $\,\overline{\phantom{a}}:G^*\to G^*/U$ is the canonical epimorphism.
  Then $\overline{H}=\overline{K}\cap\overline{G}$.
\end{lem}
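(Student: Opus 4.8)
The plan is to establish the nontrivial inclusion, since $\overline H=\overline{K\cap G}\subseteq\overline K\cap\overline G$ holds for purely formal reasons (the image of an intersection is contained in the intersection of the images). For the reverse inclusion I would take $\overline x\in\overline K\cap\overline G$ and write $\overline x=\overline k=\overline g$ with $k\in K$ and $g\in G$. Then $kU=gU$, so $k=gu$ for some $u\in U$; in particular $k\in GU$. It therefore suffices to prove that $k\in G$, because then $k\in K\cap G=H$ and $\overline x=\overline k\in\overline H$. As $K$ is $\pi$-maximal it is a $\pi$-group, so $k$ is a $\pi$-element of $GU$. Thus the whole statement reduces to the claim that \emph{every $\pi$-element of $GU$ lies in $G$}, equivalently $K\cap GU\subseteq G$.

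To set this up, note first that $G\trianglelefteqslant\trianglelefteqslant GU$ (intersect a subnormal series from $G$ to $G^*$ with $GU$) and that $U$ is a normal $\pi'$-subgroup of $GU$. I would prove the claim by induction on the subnormal defect $n$ of $G$ in $GU$. If $n=0$ then $GU=G$ and there is nothing to show. If $n\geqslant1$, fix a subnormal chain $G=G_0\trianglelefteqslant G_1\trianglelefteqslant\dots\trianglelefteqslant G_n=GU$ and look at the top term $G_{n-1}\trianglelefteqslant GU$. Since $G\leqslant G_{n-1}$ we have $GU=G_{n-1}U$, whence $GU/G_{n-1}\cong U/(U\cap G_{n-1})$ is a $\pi'$-group; consequently any $\pi$-element of $GU$ maps trivially to $GU/G_{n-1}$ and so lies in $G_{n-1}$. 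By the Dedekind modular law, $G_{n-1}=G_{n-1}\cap GU=G\,(G_{n-1}\cap U)$, so inside $G_{n-1}$ the subgroup $G$ is subnormal of defect at most $n-1$, the group $U\cap G_{n-1}$ is a normal $\pi'$-subgroup, and $G_{n-1}=G\,(U\cap G_{n-1})$. The inductive hypothesis applied in $G_{n-1}$ then places our $\pi$-element in $G$, completing the induction.

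I expect the claim to be the only real obstacle; the reduction in the first paragraph is essentially bookkeeping, and notably neither Lemma~\ref{WH-strong} nor Lemma~\ref{pimhom} is needed. The point that makes the induction work is that passing to the top of the subnormal chain produces a $\pi'$-quotient $GU/G_{n-1}$, which forces every $\pi$-element down into $G_{n-1}$, while Dedekind's law restores exactly the hypotheses of the claim in the smaller group $G_{n-1}$. In the write-up I would isolate the assertion ``the set of $\pi$-elements of $GU$ is contained in $G$'' as a short standalone step, since it is precisely the content that does not follow from generalities about images of intersections.
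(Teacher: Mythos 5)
Your proof is correct, and it takes a genuinely different route from the paper's. The paper argues by contradiction: if $\overline{H}<\overline{K}\cap\overline{G}$, then, $\overline{H}$ being proper and subnormal in the $\pi$-group $\overline{K}\cap\overline{G}$, some prime $p\in\pi$ divides $|N_{\overline{G}}(\overline{H}):\overline{H}|$; the normalizer $N_{\overline{G}}(\overline{H})$ is identified with $\overline{N_G(H)}$ via the Schur--Zassenhaus theorem (whose conjugacy part in general rests on solvability of one of the coprime factors, i.e.\ on Feit--Thompson), and the resulting divisibility $p\mid |N_G(H):H|$ contradicts the strong Wielandt--Hartley theorem, Lemma~\ref{WH-strong}, which is the deep ingredient of that argument. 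You replace all of this by a direct, self-contained induction on the subnormal defect, using only the modular law and the second isomorphism theorem; neither Lemma~\ref{WH-strong} nor Schur--Zassenhaus appears, exactly as you note, and your induction is sound (passing to the top term $G_{n-1}$ of the chain gives the $\pi'$-quotient $GU/G_{n-1}$, and Dedekind's law $G_{n-1}=G(U\cap G_{n-1})$ restores the hypotheses one step down). Your argument in fact proves a formally stronger statement: $\pi$-maximality of $K$ is never used, only that $K$ is a $\pi$-group, so $\overline{K\cap G}=\overline{K}\cap\overline{G}$ holds for \emph{every} $\pi$-subgroup $K$ of $G^*$; the paper's route cannot shed this hypothesis, since Lemma~\ref{WH-strong} is applied to the pair $(G,K)$ and is false for non-maximal $\pi$-subgroups (take $K=1$). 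One cosmetic slip: your key claim (every $\pi$-element of $GU$ lies in $G$) \emph{implies} $K\cap GU\subseteq G$ but is not equivalent to it; only that implication is used, so nothing breaks. What the paper's proof buys is brevity in context, Lemma~\ref{WH-strong} being already on hand as the standard tool of this subject; what yours buys is elementarity (no appeal to any nontrivial classification-flavored input) and the stronger, maximality-free conclusion.
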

\begin{proof} Clearly, $\overline{H}=\overline{K\cap G}\leqslant \overline{K}\cap \overline{G}$. Suppose that
$\overline{H}< \overline{K}\cap \overline{G}$. Since $G\trianglelefteqslant \trianglelefteqslant G^*$,
we have $H\trianglelefteqslant \trianglelefteqslant K$, $\overline{H} \trianglelefteqslant \trianglelefteqslant \overline{K}$,
and $\overline{H} \trianglelefteqslant \trianglelefteqslant \overline{K}\cap \overline{G}$.

Since $\overline{H}<\overline{K}\cap \overline{G}$, the index of $\overline{H}$ in $N_{\overline{G}}(\overline{H})$ is divisible by a prime
$p\in \pi$. We have
$$
N_{\overline{G}}(\overline{H})=N_G(H)U/U=\overline{N_G(H)}.
$$
Indeed, we clearly have  $\overline{N_G(H)}\leqslant N_{\overline{G}}(\overline{H})$.
Suppose $x\in N_G(HU)$ (equivalently, $\overline{x}\in N_{\overline{G}}(\overline{H})$). Then $H^xU=HU$ and the Schur--Zassenhaus theorem
implies that there is $u\in U$ such that $H^x=H^u$, i.\,e. $xu^{-1}\in N_G(H)$,  $x\in N_G(H)U$, and $\overline{x}\in\overline{N_G(H)}$.
Now, we have
$$
p\mid \left|N_{\overline{G}}(\overline{H}):\overline{H}\right|=\left|\frac{N_G(H)U}{HU}\right|=\frac{|N_G(H)|}{|N_U(H)|}:\frac{|H|}{|H\cap U|}=\left|\frac{N_G(H)}{H}\right|:\frac{|N_U(H)|}{|H\cap U|}.
$$
Since $|N_U(H)|$ is a $\pi'$-number, we have $p\mid \left|N_G(H)/H\right|$ contrary to Lemma \ref{WH-strong}.
\end{proof}

Let $\mathcal{X}$ be a linear representation of a group $G$. Given $\gamma\in \operatorname{Aut}(G)$, the conjugate representation $\mathcal{X}^\gamma$ is defined by
$\mathcal{X}^\gamma(g^\gamma)=\mathcal{X}(g)$ for all $g\in G$.

\begin{lem}\label{norim} Let $G$ be a finite group such that $Z(G)=1$. Let $\mathcal{X}:G\to \operatorname{GL}_n(F)$ be a faithful absolutely irreducible representation of $G$ over a field $F$. Suppose that $\mathcal{X}$ is not equivalent to $\mathcal{X}^\mathcal{\gamma}$ for every $\gamma\in \operatorname{Aut}(G)\setminus\operatorname{Inn}(G)$. Then $N_{\operatorname{GL}_n(F)}(\operatorname{Im}\mathcal{X})\cong C\times G$ where $C\cong F^\times$ is the scalar subgroup of $\operatorname{GL}_n(F)$.
\end{lem}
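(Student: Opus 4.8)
The plan is to study the normalizer $N=N_{\operatorname{GL}_n(F)}(\operatorname{Im}\mathcal{X})$ via the conjugation action it induces on $\operatorname{Im}\mathcal{X}$. Write $\overline{G}=\operatorname{Im}\mathcal{X}$; since $\mathcal{X}$ is faithful we have $\overline{G}\cong G$ and $Z(\overline{G})=1$, so $\operatorname{Inn}(\overline{G})\cong\overline{G}\cong G$. Every $t\in N$ normalizes $\overline{G}$ and thus induces an automorphism of $\overline{G}$, yielding a homomorphism $\phi\colon N\to\operatorname{Aut}(\overline{G})$. I would first identify $\ker\phi$ as the centralizer $C_{\operatorname{GL}_n(F)}(\overline{G})$ (the centralizer is contained in the normalizer), which by Schur's lemma applied to the absolutely irreducible representation $\mathcal{X}$ consists exactly of the nonzero scalar matrices; hence $\ker\phi=C\cong F^\times$.

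Next I would pin down $\operatorname{Im}\phi$ as precisely $\operatorname{Inn}(\overline{G})$. The inclusion $\operatorname{Inn}(\overline{G})\subseteq\operatorname{Im}\phi$ is immediate, since $\overline{G}\leqslant N$ and conjugation by an element of $\overline{G}$ is by definition inner, so $\phi(\overline{G})=\operatorname{Inn}(\overline{G})$. The reverse inclusion is where the hypothesis is used. If $t\in N$ induces $\gamma\in\operatorname{Aut}(G)$ (transported to $G$ through $\mathcal{X}$) via $\mathcal{X}(g^\gamma)=t^{-1}\mathcal{X}(g)t$, then combining this with the definition $\mathcal{X}^\gamma(g^\gamma)=\mathcal{X}(g)$ gives $\mathcal{X}^\gamma(g^\gamma)=\mathcal{X}(g)=t\,\mathcal{X}(g^\gamma)\,t^{-1}$, and setting $h=g^\gamma$ yields $\mathcal{X}^\gamma(h)=t\,\mathcal{X}(h)\,t^{-1}$ for all $h$, i.e. $\mathcal{X}^\gamma$ is equivalent to $\mathcal{X}$. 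By the assumption on $\mathcal{X}$, such an equivalence forces $\gamma\in\operatorname{Inn}(G)$. Thus $\operatorname{Im}\phi=\operatorname{Inn}(\overline{G})$, and we obtain a short exact sequence $1\to C\to N\xrightarrow{\phi}\operatorname{Inn}(\overline{G})\to1$.

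Finally I would assemble the direct-product decomposition. Since $\phi$ restricted to $\overline{G}$ already surjects onto $\operatorname{Im}\phi$, each $t\in N$ differs from some element of $\overline{G}$ by an element of $\ker\phi=C$, so $N=C\overline{G}$. The scalar subgroup $C$ is central in $\operatorname{GL}_n(F)$, hence central (in particular normal) in $N$, while $\overline{G}\trianglelefteqslant N$ by the definition of the normalizer. Moreover $C\cap\overline{G}=1$: a scalar matrix lying in $\overline{G}$ has the form $\mathcal{X}(g)$ with $\mathcal{X}(g)$ central in $\overline{G}$, forcing $g\in Z(G)=1$ and $\mathcal{X}(g)=I$ (here both faithfulness and $Z(G)=1$ enter). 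Two normal subgroups with trivial intersection whose product is $N$ give the internal direct product $N=C\times\overline{G}\cong F^\times\times G$.

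The only genuinely delicate point is the reverse inclusion $\operatorname{Im}\phi\subseteq\operatorname{Inn}(\overline{G})$: I expect the main care to lie in matching the paper's convention $\mathcal{X}^\gamma(g^\gamma)=\mathcal{X}(g)$ with the chosen conjugation convention for $\phi$, so that the computation produces an equivalence of $\mathcal{X}^\gamma$ with $\mathcal{X}$ (rather than of $\mathcal{X}^{\gamma^{-1}}$ with $\mathcal{X}$, which would be harmless but obscure the argument). Everything else reduces to Schur's lemma and the standard recognition criterion for an internal direct product.
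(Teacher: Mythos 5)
Your proposal is correct and follows essentially the same route as the paper: Schur's lemma identifies the centralizer of $\operatorname{Im}\mathcal{X}$ with the scalars $C$, the conjugation action embeds $N/C$ into $\operatorname{Aut}(G)$, the non-equivalence hypothesis forces the image to be exactly $\operatorname{Inn}(G)$ (your convention check is right, and matches the paper's computation up to replacing $\gamma$ by $\gamma^{-1}$, which is harmless since both are non-inner), and the internal direct product recognition finishes. If anything, you spell out the final step $N=C\overline{G}$, $C\cap\overline{G}=1$, more explicitly than the paper does.
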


\begin{proof} Denote $M=\operatorname{Im}\mathcal{X}\cong G$ and $N=N_{\operatorname{GL}_n(F)}(M)$. Since $\mathcal{X}$ is absolutely irreducible, we have
$C=C_{\operatorname{GL}_n(F)}(M)$ by Schur's lemma. Clearly, $C\trianglelefteqslant N$ and
$C\cap M =1$ due to $Z(G)=1$.
Therefore,  $G\leqslant \overline{N}\leqslant \operatorname{Aut}(G)$, where we
denote by $\overline{\phantom{m}}: N\to N/C$ the canonical epimorphism.
It remains to see that $G=\overline{N}$.

Assume to the contrary that there is $\gamma\in \operatorname{Aut}(G)\setminus\operatorname{Inn}(G)$ such that $\overline{t}=\gamma$ for some $t\in N$.
The identification of $G$ and $\operatorname{Inn}(G)$ shows that
$\mathcal{X}$ and $\overline{\phantom{m}}$ are mutually inverse isomorphisms between $M$ and $G$.
Namely, $x=\mathcal{X}(\overline{x})$ for every $x\in M$ and $\overline{\mathcal{X}(g)}=g$ for every $g\in L$. Since $t$ normalizes $M$, we have $\mathcal{X}(g)^t\in M$ and so
$$
\mathcal{X}(g)^t=\mathcal{X}(\overline{\mathcal{X}(g)^t})=
\mathcal{X}(\overline{\mathcal{X}(g)}^{\,\overline{t}})=\mathcal{X}(g^\gamma)=\mathcal{X}^{\gamma^{-1}}(g)
$$
which implies that the conjugate representation $\mathcal{X}^{\gamma^{-1}}$
is equivalent to $\mathcal{X}$. This contradicts the hypothesis, because $\gamma^{-1}\in\operatorname{Aut}(G)\setminus\operatorname{Inn}(G)$. The claim follows.
\end{proof}

\section{Proposition}\label{Sec3}

The following result includes \cite[Proposition~1]{20RevZav} as a particular case.

\begin{prop}\label{one}
  Suppose a finite group $G$ has a unique minimal normal subgroup~$V$ which is abelian and $V\nleqslant \operatorname{Z}(G)$.
  Suppose also that $L=G/V$ is a nonabelian simple group.  Let $H$ be a $\pi$-submaximal subgroup of $G$ and let $G^*$ have
  minimal order among the groups such that  $G\trianglelefteqslant \trianglelefteqslant G^*$ and
  $H=K\cap G$ for a $\pi$-maximal subgroup $K$ of $G^*$. Then $G\trianglelefteqslant G^*$ and $C_{G^*}(G)=1$.
\end{prop}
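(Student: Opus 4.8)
The plan is to reduce everything to the single hard point that $G$ is \emph{normal} in the minimal $G^*$, after which the triviality of the centralizer is cheap. First I would fix the structure of $G$. Every nontrivial normal subgroup contains the monolith $V$, so $Z(G)\ne1$ would force $V\le Z(G)$, contrary to hypothesis; thus $Z(G)=1$. Since $C_G(V)\trianglelefteqslant G$ is proper and contains $V$ while $C_G(V)/V\trianglelefteqslant L$ is simple, $C_G(V)=V$; the same simplicity shows that the only normal subgroups of $G$ are $1$, $V$, $G$, and a short induction shows that the only subnormal ones are $G$ and the subgroups of $V$. In particular, for $x\in G^*$ with $G^x\ne G$ the (proper, subnormal) subgroup $G\cap G^x$ lies in $V$. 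Finally, being monolithic with abelian monolith, $G$ has a faithful absolutely irreducible representation over a suitable field—any irreducible representation nontrivial on $V$ has kernel meeting $V$ trivially, hence (its kernel being normal and missing the monolith) is faithful—which is what will make Lemma~\ref{norim} available.

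Next I would dispose of the centralizer \emph{assuming} normality. Suppose it is already known that $G\trianglelefteqslant G^*$. Then $GK$ is a subgroup in which $K$ is still $\pi$-maximal and still meets $G$ in $H$, so minimality gives $G^*=GK$ and hence $G^*/G\cong K/H$ is a $\pi$-group. Therefore $C:=C_{G^*}(G)$ is normal in $G^*$ with $C\cap G=Z(G)=1$, and $C\cong CG/G$ is a $\pi$-group; being a normal $\pi$-subgroup it lies in every $\pi$-maximal subgroup, so $C\le K$. Passing to $\overline{G^*}=G^*/C$, Lemma~\ref{pimhom} makes $\overline K=K/C$ $\pi$-maximal, the modular law gives $K\cap GC=C(K\cap G)=CH$ and hence $\overline K\cap\overline G=\overline H\cong H$, while $\overline G\cong G$. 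Thus $\overline{G^*}$ realizes $H$ with $|\overline{G^*}|<|G^*|$ unless $C=1$, and minimality yields $C_{G^*}(G)=1$. So the whole proposition is reduced to proving $G\trianglelefteqslant G^*$.

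To prepare for that, I would trim the radical of $G^*$. Writing $p$ for the prime of $V$, I claim $O_q(G^*)=1$ for every prime $q\ne p$: indeed $O_q(G^*)\cap G\trianglelefteqslant G$ is a $q$-group, hence $1$ (it can be neither $V$ nor $G$), so factoring out $O_q(G^*)$ preserves both $G$ and $H$—when $q\in\pi'$ via Lemmas~\ref{pimhom} and~\ref{three}, and when $q\in\pi$ via Lemma~\ref{pimhom} and the modular law exactly as above—whence minimality forces $O_q(G^*)=1$. On the other hand the conjugates $V^x$ are subnormal $p$-subgroups, so by Wielandt's join theorem $R:=V^{G^*}$ is a normal $p$-subgroup; thus $R\le O_p(G^*)$, and combining with the previous sentence $F(G^*)=O_p(G^*)=:R\ne1$ with $V\le R$ and $G\cap R=V$. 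Passing to $\overline{G^*}=G^*/R$ (again by Lemma~\ref{three} or the modular law) I would obtain $\overline G\cong G/V=L$, a nonabelian simple subnormal subgroup, with $\overline K$ $\pi$-maximal and $\overline K\cap\overline G=\overline H$.

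The crux is the final step. In $\overline{G^*}$ the subnormal simple subgroup $\overline G\cong L$ is a component, and its $\overline{G^*}$-conjugates are pairwise-commuting components whose product is $\overline G^{\,\overline{G^*}}$; the image of $G$ is normal exactly when there is a single such component. The decisive feature is that $L=\overline G$ acts on $R$, and on the submodule $V$ faithfully and non-centrally (because $C_G(V)=V$ and $V\nleqslant Z(G)$), so the component $\overline G$ is rigidly tied to the $G^*$-module $R$; Lemma~\ref{norim}, applied to $L$ acting on $V$, pins down the relevant normalizer inside $\operatorname{GL}(V)$ as the scalars times $L$ and so prevents a second component from coexisting with the same monolith. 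Ruling out extra conjugates this way gives $\overline G\trianglelefteqslant\overline{G^*}$, i.e. $GR\trianglelefteqslant G^*$; descending through $R$ by means of the $G^*$-action on $R$ and $G\cap R=V$ should then yield $G\trianglelefteqslant G^*$. I expect this last stage to be the genuine obstacle: controlling the interaction between the components of $G^*/R$ and the $p$-radical $R$, and verifying that the module $V$ is outer-twist-rigid so that Lemma~\ref{norim} truly applies, is precisely where the full force of the hypotheses—a unique, non-central, abelian minimal normal subgroup with simple quotient—must be used.
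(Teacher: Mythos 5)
Your preliminary reductions are essentially correct, and they organize the problem differently from the paper. The structural facts about $G$ ($Z(G)=1$, $C_G(V)=V$, the list of normal and subnormal subgroups), the derivation of $C_{G^*}(G)=1$ \emph{assuming} $G\trianglelefteqslant G^*$ (minimality gives $G^*=GK$, so $C_{G^*}(G)$ is a normal $\pi$-subgroup meeting $G$ trivially, hence lies in $K$, and factoring it out contradicts minimality via Lemma~\ref{pimhom} and the Dedekind law), and the elimination of $O_q(G^*)$ for $q\ne p$ are all valid uses of Lemmas~\ref{pimhom} and~\ref{three} together with minimality. The paper does neither of these: it obtains $C_{G^*}(G)=1$ from the fact that $W=\langle V^g\mid g\in G^*\rangle$ is the \emph{unique} minimal normal subgroup of $G^*$ (so any nontrivial centralizer would contain $W$, forcing $V\leqslant Z(G)$), and it never needs the $q$-radicals. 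So far, so good — but this is the easy half.

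The step you yourself flag as the obstacle, $G\trianglelefteqslant G^*$, is a genuine gap, and the tool you propose cannot close it. Lemma~\ref{norim} requires a faithful \emph{absolutely} irreducible representation that is inequivalent to all of its twists by outer automorphisms; neither hypothesis follows from those of Proposition~\ref{one} ($V$ is only irreducible over $\mathbb{F}_p$, and twist-rigidity can simply fail). In the paper this rigidity is a verified property of one concrete $3$-dimensional module for $\operatorname{PSL}_2(7)$, and Lemma~\ref{norim} is used only in the Examples section (Lemma~\ref{autg}), never in the proof of the Proposition. Moreover, the argument it is supposed to support does not parse: the components of $G^*/R$ act on $R$ only if $R$ is abelian (which you have not shown for $O_p(G^*)$; the paper gets an elementary abelian $W$ precisely because it first proves $W$ is minimal normal), $V$ need not be invariant under anything but $G$, and — most fundamentally — the existence of several conjugate components is not a representation-theoretic impossibility at all (nothing about the module prevents configurations of the shape $G\times G$); what rules it out is minimality of $|G^*|$. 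Exploiting that minimality is the actual content of the paper's proof: setting $G^0=N_{G^*}(GW)$ and $N=N_K(GW)$, one extends $N$ to a $\pi$-maximal subgroup $M$ of $G^0$ and proves $M\cap G=H$ by a transversal argument (the conjugates $M_i=(M\cap GW)^{g_i}$ are permuted by $K$ and commute modulo $W$ because distinct components commute, so their join lies in $\overline{K}$); minimality then forces $G^*=G^0$, i.e.\ $GW\trianglelefteqslant G^*$, and the descent to $G\trianglelefteqslant G^*$ still needs Clifford's theorem ($W=VC_W(G)$, using that the minimal normal subgroup $W$ normalizes the subnormal subgroup $G$) together with $C_{G^*}(X)=1$ to conclude $W=V$. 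None of these ideas appear in your sketch, so the proposal establishes the cheap reductions but leaves the theorem's core unproved.
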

\begin{proof}
Since $V$ is abelian minimal normal, it is an elementary abelian $p$-group for a prime $p$.
Denote $W=\langle V^g\mid g \in G^*\rangle$, the normal closure of $V$ in $G^*$. Note that $W$ is a $p$-group as the subgroup
generated by subnormal $p$-subgroups. If $p\in \pi$ then $W\leqslant K$, and if $p\not\in\pi$ then $W\cap K= 1$.
Denote by $\overline{\phantom{a}}:G^*\to G^*/W$ the canonical epimorphism. Set $X=\langle G^g\mid g \in G^*\rangle$.
The minimality of $|G^*|$ implies that $G^*=KX$. Moreover, $\overline{X}$ is minimal normal in $\overline{G^*}$.
In particular, $GW\trianglelefteqslant X$.

We show that every minimal normal subgroup $U$ of $G^*$ such that $U\nleqslant W$ is a $\pi$-group. Indeed, we have
$U\cap W=1$ and $\overline{U}\cong U$. Also, $\overline{U}$ is minimal normal in $\overline{G^*}$, for
if $\overline{M}\trianglelefteqslant \overline{G^*}$ and $M\leqslant U$ then
$$
[M,G^*]\leqslant U\cap MW = (U\cap W)M=M
$$
and $M\trianglelefteqslant G^*$. Therefore, either $\overline{U}=\overline{X}$ or $\overline{U}\cap \overline{X}=1$. In the former case,
we have $X=UW\cong U\times W$, which is impossible in view of the structure of $G$. In the latter case, $U\cap X\leqslant U\cap W = 1$
and $U$ can be embedded into $G^*/X\cong K/(K\cap X)$, i.\,e. $U$ is a $\pi$-group.

Consequently, every minimal normal subgroup $U$ of $G^*$ is either a $\pi$- or a $\pi'$-group, for we have either $U\nleqslant W$ or
$U\leqslant W$, and $W$ is a $\pi$- or a $\pi'$-group depending on whether $p\in \pi$.

We show that $W$ is a unique minimal normal subgroup of $G^*$. Let $U$ be a minimal normal subgroup of $G^*$. It suffices to show that
$U\cap V\ne 1$ and so $V\leqslant U$ and $W=\langle V^g\mid g \in G^*\rangle\leqslant U$. Assume to the contrary that $U\cap V= 1$.

Denote by $\widetilde{\phantom{a}}:G^*\to G^*/U$ the canonical epimorphism. By Lemma \ref{pimhom}, $\widetilde{K}$ is a $\pi$-maximal
subgroup of $\widetilde{G^*}$. We have $G\cap U\leqslant V\cap U=1$ as $V$ is the unique minimal normal subgroup of $G$.
It follows that $G\cap U=1$ and $G\cong \widetilde{G}$. We show that $\widetilde{H}=\widetilde{K}\cap \widetilde{G}$.
This follows from Lemma \ref{three} if $U$ is a $\pi'$-group. If $U$ is a $\pi$-group, we have $U\leqslant K$ and $GU\cap K=(G\cap K)U=HU$.
This again implies $\widetilde{H}=\widetilde{K}\cap \widetilde{G}$. We now have a contradiction with the minimality of $|G^*|$, and so
$U\cap V\ne 1$ as claimed.

Let us now show that $W =VC_W(G)$. By Clifford's theorem and the subnormality of $G$ in $G^*$, the $\mathbb{F}_pG$-module $W$ is
completely reducible. An arbitrary irreducible submodule $U$ of $W$ that is not contained in $C_W(G)$ must be contained in $V$. This
follows since $W$ normalizes $G$, see \cite[Theorem 2.6]{08Isa}, and
$$
U=[U,G]\leqslant U\cap G\leqslant W\cap G=V.
$$
Therefore, $W =VC_W(G)$.

We now prove that $C_{G^*}(X)=1$. Assuming the contrary we have $W\leqslant C_{G^*}(X)\trianglelefteqslant G$, because $W$ is the unique
minimal normal subgroup of $G^*$. Then $V\leqslant W\cap G\leqslant C_G(G)=\operatorname{Z}(G)$ which contradicts the assumption that
$V\nleqslant \operatorname{Z}(G)$.

Denote $N=N_K(GW)$, $G^0=N_{G^*}(GW)$. We have $G^0=NX$, since $X\leqslant G^0$ and $G^*=KX$.

We show that if $M$ is a $\pi$-maximal subgroup of $G^0$ that contains $N$ then $H=G\cap M$. Firstly, we have $H=G\cap N$,
because $H\leqslant N$ and $G\cap N\leqslant G\cap K=H$. Secondly, $G^0=NX=MX$ by the above.

Let $1=g_1,\ldots,g_m$ be a right transversal of $N$ in $K$ which will also be a right transversal of $G^0$ in $G^*$.
We set $M_i=(M\cap GW)^{g_i}$. For every $g\in K$, there exists $\sigma\in \operatorname{Sym}_m$ and $t_1,\ldots,t_m\in N$ such that
$g_ig=t_ig_{i\sigma}$. Therefore,
$$
M_i^g=(M\cap GW)^{g_ig}=(M\cap GW)^{t_ig_{i\sigma}}=(M\cap GW)^{g_{i\sigma}}=M_{i\sigma}.
$$
It follows that $K$ normalizes the subgroup $M_X=\langle M_i\mid i=1,\ldots, m\rangle$. Hence, $\overline{K}$ normalizes $\overline{M_X}$.
Note that $\overline{M_X}$ is a $\pi$-group, since $[\overline{M_i},\overline{M_j}]=1$ for $i\ne j$ and the $M_i$'s are $\pi$-groups.
Also, $\overline{M_X}\leqslant \overline{K}$, because $\overline{K}$ is $\pi$-maximal in $\overline{G^*}$.

Now, if $W$ is a $\pi$-group, we have $M_X\leqslant K$ and
$$
M\cap G\leqslant M\cap GW=M_1 \leqslant M_X\leqslant K,
$$
which yields
$$
H=N\cap G\leqslant M\cap G\leqslant K\cap G =H
$$
as claimed. Suppose $W$ is a $\pi'$-group. Then Lemma \ref{pimhom} implies
$$
\overline{K}\cap\overline{G}=\overline{H}\leqslant \overline{N}\cap\overline{G}\leqslant \overline{M}\cap \overline{G}\leqslant
\overline{M_X}\cap\overline{G}\leqslant\overline{K}\cap\overline{G},
$$
which yields $\overline{H}=\overline{M}\cap\overline{G}$. Another application of Lemma \ref{pimhom}
gives $\overline{M}\cap\overline{G}=\overline{M\cap G}$, hence $HW=(M\cap G)W$. Therefore, $|H|=|M\cap G|$. Since
$H=N\cap G\leqslant M\cap G$, we have $H=M\cap G$ as claimed.

The minimality of $G^*$ now gives $X=GW$ and $X/W\cong L$. Therefore, $C_W(G)=C_W(X)\leqslant C_{G^*}(X)=1$ and
$W=VC_W(G)=V$. Consequently, $X=GV=G$ and $G\trianglelefteqslant G^*$.
\end{proof}

\section{Examples}
In this section, we construct an infinite series of examples where the image of a $\pi$-submaximal subgroup under an epimorphism $\phi$ whose kernel is an abelian $\pi'$-group is not $\pi$-submaximal in $\operatorname{Im}\phi$. Conversely, we give examples where a $\pi$-submaximal subgroup of $\operatorname{Im}\phi$ is the image of no $\pi$-submaximal in the domain of $\phi$.
In these examples, $\pi=\{2,3\}$.

The simple group $L=\operatorname{PSL}_2(7)$ has presentation
\begin{equation}\label{pres}
L=\langle a,b \mid a^2=b^3=(ab)^7=[a,b]^4=1 \rangle,
\end{equation}
see \cite{atl}. Let $F$ be a finite field of characteristic coprime with $|L|=2^3.3.7$ in which the polynomial $x^2+x+2$ has distinct roots, say, $\alpha$ and $\beta$. For example, such is every field $\mathbb{F}_p$ for an odd prime $p\equiv 1,2,4 \pmod{7}$.
It can be readily seen that the matrices
$$
A=\left(
    \begin{array}{ccc}
      1& \alpha & \beta \\
      0 & -1 &  0 \\
      0 &  0 & -1 \\
    \end{array}
  \right),\qquad
B=\left(
    \begin{array}{ccc}
      0 & 1 & 0 \\
      0 & 0 & 1 \\
      1 & 0 & 0 \\
    \end{array}
  \right)
$$
satisfy the defining relations in (\ref{pres}) so that the map $a\mapsto A$, $b\mapsto B$ determines an absolutely irreducible faithful representation $\mathcal{X}:L\to \operatorname{GL}_3(F)$ whose $F$-character $\chi$ is shown in Table \ref{bch}.

\begin{table}[htb]
\centering
\caption{The $F$-character of $\mathcal{X}$}\label{bch}
\begin{tabular}{c|cccccc}
                         & $1a$ & $2a$ & $3a$ & $4a$ & $7a$ & $7b$ \\ [2pt]
   \hline
 $\chi^{\vphantom{A^{A^A}}}$ & $3$  & $-1$ & $0$ & $1$ & $\alpha$ & $\beta$
 \end{tabular}
\end{table}

We identify $L$ with the subgroup $\operatorname{Inn}(L)$ of $\operatorname{Aut}(L)\cong \operatorname{PGL}_2(7)$. It is known that $\operatorname{Aut}(L)$ is the extension $L\langle \delta \rangle$, where $\delta$ is a diagonal automorphism of $L$ of order~$2$.

Observe that the conjugate representation $\mathcal{X}^\delta$ is not equivalent to $\mathcal{X}$, because $\delta$ permutes the two conjugacy classes of $L$ of order $7$ while $\chi$ has distinct values on these classes.

Let $V$ be the $3$-dimensional $FL$-module corresponding to $\mathcal{X}$ and let $G$ be the natural semidirect product of $V$ and $L$. Note that $Z(G)=1$ and so we may identify $G$ with $\operatorname{Inn}(G)$ inside $\operatorname{Aut}(G)$.

\begin{lem}\label{autg}
$\operatorname{Aut}(G)/V \cong C\times L$, where $C\cong F^\times$.
\end{lem}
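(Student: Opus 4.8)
The plan is to realize $\operatorname{Aut}(G)$ concretely inside $\operatorname{GL}_3(F)$ and then apply Lemma~\ref{norim}. Since $G=V\rtimes L$ with $V$ elementary abelian and $L$ simple, the subgroup $V$ is characteristic in $G$ (it is the unique minimal normal subgroup, being the socle or the unique abelian normal subgroup on which $L$ acts irreducibly and nontrivially). Hence every automorphism of $G$ stabilizes $V$ and induces an automorphism of $G/V\cong L$ and a semilinear action on $V$. First I would set up the restriction homomorphism $\rho:\operatorname{Aut}(G)\to \operatorname{GL}(V)\cong\operatorname{GL}_3(F)$ sending each $\theta\in\operatorname{Aut}(G)$ to its action on $V$. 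The image of $\rho$ should be exactly the normalizer $N_{\operatorname{GL}_3(F)}(\operatorname{Im}\mathcal{X})$: every element of $G$ acting by conjugation (i.e.\ $\operatorname{Inn}(G)/C_G(V)=L$ together with the translations by $V$) gives the subgroup $M=\operatorname{Im}\mathcal{X}$, and the normalizer condition encodes precisely that an invertible transformation of $V$ extends to an automorphism of $G$ compatible with the $L$-action.

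The key point is that the hypotheses of Lemma~\ref{norim} are met: $\mathcal{X}:L\to\operatorname{GL}_3(F)$ is faithful and absolutely irreducible by construction, $Z(L)=1$, and the observation preceding this lemma shows that $\mathcal{X}^\delta\not\cong\mathcal{X}$, so $\mathcal{X}^\gamma\not\cong\mathcal{X}$ for every $\gamma\in\operatorname{Aut}(L)\setminus\operatorname{Inn}(L)$ (since $\operatorname{Aut}(L)=L\langle\delta\rangle$ and the inequivalence of conjugate representations depends only on the coset of $\gamma$ modulo $\operatorname{Inn}(L)$). Lemma~\ref{norim} then gives
\[
N_{\operatorname{GL}_3(F)}(\operatorname{Im}\mathcal{X})\cong C\times G',\qquad C\cong F^\times,
\]
where I am writing $G'$ for the copy of $L$ in that lemma (not the derived subgroup). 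So the structure $C\times L$ appears naturally as the normalizer of the image.

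It remains to identify $\operatorname{Aut}(G)/V$ with this normalizer. I would argue that the kernel of $\rho$ consists precisely of the inner automorphisms induced by $V$: if $\theta$ acts trivially on $V$ and trivially on $G/V\cong L$, then $\theta$ is given by a $1$-cocycle $L\to V$, and since $H^1(L,V)=0$ (because $|L|$ is coprime to the characteristic of $F$, so the extension is cohomologically split and all complements are conjugate), $\theta$ is conjugation by an element of $V$. Thus $\ker\rho=V$ (identified with $\operatorname{Inn}_V(G)$), giving $\operatorname{Aut}(G)/V\cong\operatorname{Im}\rho$. The main obstacle is verifying the two halves of $\operatorname{Im}\rho=N_{\operatorname{GL}_3(F)}(M)$: the inclusion $\operatorname{Im}\rho\leqslant N$ is immediate since $\theta$ conjugates the $L$-action into itself, while the reverse inclusion requires checking that any $n\in N$ extends to a genuine automorphism of $G$. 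For the latter, conjugation by $n$ sends $\mathcal{X}$ to an equivalent representation, hence induces an automorphism $\gamma_n$ of $L$ via $\mathcal{X}\circ\gamma_n=\mathcal{X}^{n}$; pairing this $\gamma_n$ on the $L$-factor with the linear map $n$ on $V$ yields a well-defined automorphism of the semidirect product $G$, because the semidirect product structure is precisely the compatibility $\mathcal{X}(g)v=\,{}^{g}v$ that $n$ respects. Once both inclusions are established, the isomorphism $\operatorname{Aut}(G)/V\cong C\times L$ follows. I expect the cohomological vanishing $H^1(L,V)=0$ and the clean correspondence between normalizing elements and automorphisms to be routine given the coprimality assumption, so the real content is the bookkeeping that assembles $\rho$ and invokes Lemma~\ref{norim}.
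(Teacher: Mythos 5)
Your strategy is the same as the paper's: identify $\operatorname{Aut}(G)/V$ with the normalizer $N_{\operatorname{GL}_3(F)}(\operatorname{Im}\mathcal{X})$ and then apply Lemma~\ref{norim}, whose hypotheses you verify exactly as the paper does (including the observation that equivalence of $\mathcal{X}^\gamma$ with $\mathcal{X}$ depends only on the coset $\gamma\operatorname{Inn}(L)$). The difference is in packaging: the paper cites Robinson's exact sequence
\[
0\to Z^1(L,V)\to \operatorname{Aut}(G)\to N_{\operatorname{GL}_3(F)}(\operatorname{Im}\mathcal{X})\to H^2(L,V),
\]
getting surjectivity from $H^2(L,V)=0$ and the kernel $Z^1(L,V)=B^1(L,V)\cong V$ from $H^1(L,V)=0$ together with $C_V(L)=0$, whereas you construct the restriction homomorphism $\rho$ by hand, prove surjectivity directly by lifting each $n\in N_{\operatorname{GL}_3(F)}(\operatorname{Im}\mathcal{X})$ to the automorphism $(v,g)\mapsto(nv,\gamma_n(g))$ of the split extension $G=VL$ (a legitimate and slightly more elementary replacement for the $H^2$ vanishing, since splitness makes the lifting obstruction disappear), and use $H^1(L,V)=0$ for the kernel just as the paper does.

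There is, however, one genuine gap in your kernel computation. The kernel of your $\rho$ is by definition the set of automorphisms acting trivially on $V$, but your cocycle argument only covers automorphisms acting trivially on $V$ \emph{and} on $G/V\cong L$; a priori $\ker\rho$ could be larger, and then the isomorphism $\operatorname{Aut}(G)/V\cong\operatorname{Im}\rho$ would not follow. The missing step is the implication that $\theta|_V=\operatorname{id}$ forces $\theta$ to induce the identity on $G/V$, and it uses faithfulness of $\mathcal{X}$: for $\theta\in\ker\rho$, $g\in G$ and $v\in V$ we have $gvg^{-1}=\theta(gvg^{-1})=\theta(g)v\theta(g)^{-1}$, so $g^{-1}\theta(g)\in C_G(V)$; since $L$ acts faithfully on $V$, we get $C_G(V)=V$, hence $\theta(g)\in gV$ for all $g$, i.e.\ $\bar\theta=\operatorname{id}$ on $G/V$. (This point is built into the exact sequence the paper cites, which is why the paper does not need to address it separately.) With this one line added, your argument is complete and correct.
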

\begin{proof} Observe that $V$ is characteristic in $G$ and so every automorphism of $G$ leaves~$V$ invariant. Hence, we may apply the general theory of automorphisms of group extensions. By \cite[(4.4)]{82Rob}, there is an exact sequence of groups
$$
0\to Z^1(L,V) \to \operatorname{Aut}(G) \to N_{\operatorname{GL}_3(F)}(\operatorname{Im}\mathcal X)
\to H^2(L,V).
$$
Since the characteristic of $F$ is coprime to $|L|$, we have $H^2(L,V)=H^1(L,V)=0$.
Also, $H^1(L,V)=Z^1(L,V)/B^1(L,V)$ and
$B^1(L,V)\cong V/C_V(L)$. Therefore, we have $Z^1(L,V)=B^1(L,V)\cong V$ due to $C_V(L)=0$.

As we observed above, $\mathcal{X}^\delta$ is not equivalent to $\mathcal{X}$. Also, $Z(L)=1$, since $L$ is simple. Hence, $N_{\operatorname{GL}_3(F)}(\operatorname{Im}\mathcal X)\cong C\times L$ by Lemma \ref{norim}. The claim follows from these remarks.
\end{proof}

Let $\pi=\{2,3\}$.
As was observed in \cite[Example 2, p. 170]{86Suz}, the $2$-Sylow subgroups of $L$
are not $\pi$-maximal, because $L$ includes subgroups of order $2^3\cdot3$, however
they are $\pi$-submaximal, because they are intersections of $L$ with the $2$-Sylow subgroups
of $\operatorname{Aut}(L)$ which are maximal.

\begin{lem}\label{s2n}
  In the above notation, the $2$-Sylow subgroups of $G$ are not $\pi$-sub\-ma\-xi\-mal.
\end{lem}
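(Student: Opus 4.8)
The plan is to argue by contradiction and reduce a hypothetical $\pi$-submaximal Sylow $2$-subgroup of $G$ to a $\pi$-maximal subgroup of $L$, then invoke the fact recorded just before this lemma that a Sylow $2$-subgroup of $L$ is \emph{not} $\pi$-maximal in $L$. All Sylow $2$-subgroups of $G$ are conjugate and $\pi$-submaximality is preserved under conjugation by elements of $G$, so it suffices to treat one such subgroup $S$. Since $V$ is an elementary abelian $p$-group with $p\nmid|L|$, in particular $p\notin\pi$, it is a normal $\pi'$-subgroup of $G$; hence $S\cap V=1$ and we may take $S\leqslant L$, so that $S$ is a Sylow $2$-subgroup of $L\cong\operatorname{PSL}_2(7)$ of order $2^3$. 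Assume $S$ is $\pi$-submaximal in $G$. The hypotheses of Proposition~\ref{one} hold for $G$: the module $V$ is abelian, it is the unique minimal normal subgroup (because $\mathcal X$ is faithful and irreducible, so $C_G(V)=V$), $V\nleqslant Z(G)=1$, and $G/V\cong L$ is nonabelian simple. Choosing $G^*$ of minimal order with $G\trianglelefteqslant\trianglelefteqslant G^*$ and $S=K\cap G$ for a $\pi$-maximal subgroup $K$ of $G^*$, the proposition yields $G\trianglelefteqslant G^*$ and $C_{G^*}(G)=1$, whence $G=\operatorname{Inn}(G)\trianglelefteqslant G^*\leqslant\operatorname{Aut}(G)$.

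Next I would pass to $\operatorname{Aut}(G)/V$. By Lemma~\ref{autg}, $\operatorname{Aut}(G)/V\cong C\times L$ with $C\cong F^\times$, and $V$ is a normal $\pi'$-subgroup of $\operatorname{Aut}(G)$, hence of $G^*$. Let $\overline{\phantom{a}}\colon G^*\to G^*/V$ be the canonical epimorphism. The image $\overline{G}$ of $\operatorname{Inn}(G)$ is the direct factor $1\times L$ (it corresponds to $\operatorname{Im}\mathcal X$ under the isomorphism of Lemma~\ref{autg}), and since $\operatorname{Inn}(G)\trianglelefteqslant G^*$ the intermediate subgroup must have the form $\overline{G^*}=D\times L$ for some $D\leqslant C$. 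Lemma~\ref{pimhom} shows $\overline{K}$ is $\pi$-maximal in $D\times L$, while Lemma~\ref{three}, applied with $U=V$, gives
$$
\overline{S}=\overline{K\cap G}=\overline{K}\cap\overline{G}=\overline{K}\cap(1\times L).
$$
As $S\cap V=1$, the image $\overline{S}\cong S$ is a Sylow $2$-subgroup of $L$ of order $2^3$.

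It remains to describe the $\pi$-maximal subgroups of $D\times L$. The abelian group $D$ is central in $D\times L$; writing $D=D_\pi\times D_{\pi'}$ for its Hall decomposition, the central $\pi$-subgroup $D_\pi$ lies in every $\pi$-maximal subgroup by maximality. The projection $D\times L\to L$ sends $\overline{K}$ onto a $\pi$-subgroup $P$ of $L$, and since the projection of $\overline{K}$ to $D$ is a $\pi$-subgroup contained in $D_\pi$, we obtain $\overline{K}\leqslant D_\pi\times P$ and so $\overline{K}=D_\pi\times P$; maximality then forces $P$ to be $\pi$-maximal in $L$. Hence $\overline{K}\cap(1\times L)\cong P$ is $\pi$-maximal in $L$, and by the display above $\overline{S}\cong S$ is $\pi$-maximal in $L$---contradicting the fact that a Sylow $2$-subgroup of $L$, being properly contained in a subgroup of order $2^3\cdot3$, is not $\pi$-maximal. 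This contradiction establishes the lemma. The crux, and the step demanding the most care, is precisely this last structural point: adjoining the central abelian group $D$ cannot make the Sylow $2$-subgroup $\pi$-maximal, because every $\pi$-maximal subgroup of $D\times L$ is a direct product $D_\pi\times P$ and therefore meets the factor $L$ in a genuinely $\pi$-maximal subgroup.
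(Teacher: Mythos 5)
Your proof is correct and follows essentially the same route as the paper's: assume $\pi$-submaximality, invoke Proposition~\ref{one} to get $G\trianglelefteqslant G^*\leqslant\operatorname{Aut}(G)$, pass to the quotient by the normal $\pi'$-subgroup $V$ using Lemmas~\ref{three} and~\ref{pimhom}, identify $\overline{G^*}=D\times L$ via Lemma~\ref{autg}, and contradict the non-$\pi$-maximality of a Sylow $2$-subgroup of $L$. Your explicit verification that every $\pi$-maximal subgroup of $D\times L$ has the form $D_\pi\times P$ with $P$ $\pi$-maximal in $L$ (using centrality of $D$) is a detail the paper asserts without proof, so your write-up is, if anything, slightly more complete.
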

\begin{proof}
Let $S$ be a $2$-Sylow subgroup of $G$ and suppose to the contrary that $S$ is $\pi$-submaximal.
Let $G^*$ be a finite group of minimal order such that
$G\trianglelefteqslant \trianglelefteqslant G^*$ and there is a $\pi$-maximal subgroup $K$ of $G^*$ satisfying $S=G\cap K$. Proposition \ref{one} implies that $G\trianglelefteqslant G^*$ and $C_{G^*}(G)=1$, i.\,e. $G^*\leqslant \operatorname{Aut}(G)$. Let
$\overline{\phantom{m}}: \operatorname{Aut}(G)\to \operatorname{Aut}(G)/V$ be the canonical epimorphism.
Since $V$ is a $\pi'$-group, we have $\overline{S}=\overline{G}\cap \overline{K}$ by Lemma~\ref{three}, $\overline{S}$ is a $2$-Sylow subgroup of $\overline{G}\cong L$, and $\overline{K}$ is $\pi$-maximal in $\overline{G^*}$ by Lemma~\ref{pimhom}.

By Lemma \ref{autg}, we have $\overline{G^*}\cong C_0 \times L$, where $C_0$
is a subgroup of $F^\times$. In particular, $K$
is the direct product of a $\pi$-maximal subgroup of $C_0$ and a $\pi$-maximal subgroup of $\overline{G}\cong L$. Thus, $\overline{S}$ is $\pi$-maximal in $L$ contrary to the observation above. This completes the proof.
\end{proof}

The following example shows that the homomorphic image of a $\pi$-submaximal subgroup is not necessarily $\pi$-submaximal.

\medskip

{\bf Example 1.} Let $V^*$ denote the $FL$-module contragredient to $V$. Since $\delta$ interchanges $V$ and $V^*$, it naturally acts on $V\oplus V^*$. Hence, the semidirect product $H=(V\oplus V^*)L$ can be extended to $H^*=H\langle\delta\rangle$. As above, let $\pi=\{2,3\}$. The $2$-Sylow subgroup of $H^*$ is $\pi$-maximal, because its homomorphic image in $L\langle\delta\rangle$ is $2$-Sylow which is a maximal subgroup. Thus, the $2$-Sylow subgroup $S$ of $H$ is $\pi$-submaximal. Let $\overline{\phantom{m}}: H\to H/V^*$ be the canonical epimorphism.
The image $\overline{H}$ is isomorphic to the semidirect product $G=VL$, and
Lemma \ref{s2n} implies that $\overline{S}$ is not $\pi$-submaximal.

\medskip

A $\pi$-submaximal subgroup of a homomorphic image need not be the homomorphic image of a $\pi$-submaximal subgroup as shows the next example.

\medskip

{\bf Example 2.} Consider the canonical epimorphism  $\overline{\phantom{m}}: G\to G/V\cong L$.
We noted above that a $2$-Sylow subgroup $S$ of $L$ is $\pi$-submaximal. However, there is no $\pi$-submaximal subgroup $T$ of $G$ such that $\overline{T}=S$, because $T$ would clearly need to be $2$-Sylow in $G$, but the $2$-Sylow subgroups of $G$ are not $\pi$-submaximal by Lemma~\ref{s2n}.

\medskip

In connection with the study of $\pi$-submaximal subgroups in minimal nonsolvable groups begun in
\cite{18GuoRev}, the paper \cite{20RevZav} gives an example of a minimal nonsolvable group
$G$ such that the Frattini subgroup $\Phi(G)$ is a $\pi$-group, where $\pi=\{2,3\}$, and a
$\pi$-submaximal subgroup of the minimal simple group $G/\Phi(G)\cong \operatorname{PSL}_2(7)$ is not the homomorphic image of any $\pi$-submaximal subgroup of $G$. Furthermore, it was shown in \cite{18GuoRev} that, for every   minimal nonsolvable group $G$, the image of a $\pi$-submaximal subgroup in $G/\Phi(G)$ is always $\pi$-submaximal in the minimal simple group $G/\Phi(G)$. In the same paper, the $\pi$-submaximal subgroups in minimal simple groups were classified. It would be interesting to see if there exists  a minimal nonsolvable group $G$ such that $\Phi(G)$ is a $\pi'$-group and $G/\Phi(G)$ has a $\pi$-submaximal subgroup that is not the image of any $\pi$-submaximal subgroup of $G$.

\medskip

{\em Acknowledgment.} This work was funded by RFBR and BRFBR, project \textnumero\ 20-51-00007 and by the Program of Fundamental Scientific Research of the SB RAS \textnumero\ I.1.1., project \textnumero\ 0314-2016-0001.

\end{document}